\documentclass[10pt, a4paper, twoside]{amsart}

\usepackage[english]{babel}
\usepackage[latin1]{inputenc}

\usepackage{amsmath}
\usepackage{amsthm}
\usepackage{amssymb}
\usepackage{tikz}
\usepackage{csquotes}

\usepackage{imakeidx} 
\usepackage{appendix}
\usepackage{tikz-cd}
\usepackage{verbatim}
\usepackage{enumitem}
\usepackage{multicol}
\usepackage{textgreek}

\usepackage[backref=page]{hyperref}
\usepackage{cleveref}

\hypersetup{colorlinks=true,linkcolor=blue,anchorcolor=blue,citecolor=blue}

\usepackage{adjustbox}

\usepackage{mathtools}

\newtheorem{thm}{Theorem}[section]
\newtheorem{prop}[thm]{Proposition}
\newtheorem{lem}[thm]{Lemma}
\newtheorem{cor}[thm]{Corollary}

\theoremstyle{definition}
\newtheorem{df}[thm]{Definition}
\newtheorem{ex}[thm]{Example}

\theoremstyle{remark}
\newtheorem{rem}[thm]{Remark}
\numberwithin{equation}{section} 
\usepackage{stackengine,scalerel}
\stackMath
\newcommand\doublehat[1]{
  \stackon[-8.5pt]{\stackon[-7pt]{#1}{\widehat{\phantom{#1}}}}{\widehat{\phantom{#1}}}
}

\numberwithin{equation}{section}

\def\P{\mathbb{P}}
\def\C{\mathbb{C}}
\def\F{\mathbb{F}}
\def\Q{\mathbb{Q}}

\def\Z{\mathbb{Z}}
\def\R{\mathbb{R}}

\DeclareMathOperator{\Image}{Im}

\DeclareMathOperator{\Gal}{Gal}

\DeclareMathOperator{\Pic}{Pic}

\DeclareMathOperator{\Sym}{Sym}
\DeclareMathOperator{\id}{id}

\DeclareMathOperator{\Hom}{Hom}

\DeclareMathOperator{\ch}{ch}

\newcommand{\ov}{\overline}
\newcommand{\on}{\operatorname}
\newcommand{\mc}{\mathcal}

\hyphenation{Gro-then-dieck}

\author{Federico Scavia}
\address{CNRS\\
Institut Galil\'ee\\
	Universit\'e Sorbonne Paris Nord\\
	99 avenue Jean-Baptiste Cl\'ement, 93430\\ 
	Villetaneuse, France}
\email{scavia@math.univ-paris13.fr}

\author{Fumiaki Suzuki}

\address{Institute of Algebraic Geometry\\
Leibniz University Hannover\\
Welfengarten 1, 30167, Hannover\\
Germany
}
\email{suzuki@math.uni-hannover.de}

\title[\tiny On direct summands of products of Jacobians]{On direct summands of products of Jacobians over arbitrary fields}

\subjclass[2020]{14C25, 14K15, 14H40, 14G15}

\begin{document}

\begin{abstract}
We show that a principally polarized abelian variety over a field $k$ is, as an abelian variety, a direct summand of a product of Jacobians of curves which contain a $k$-point if and only if the polarization and the minimal class are both algebraic over $k$.
This extends results of Beckmann--de Gaay Fortman and Voisin over the complex numbers to arbitrary fields, and refines an obstruction to the direct summand property over $\mathbb{Q}$ due to Petrov--Skorobogatov.
We also give applications to the integral Tate conjecture for divisors and for $1$-cycles on abelian varieties over finitely generated fields; our results also address a $p$-adic version of the integral Tate conjecture over finite fields of characteristic $p$,
for the first time beyond the case of divisors.
\end{abstract}

\maketitle

\section{Introduction}

Let $X$ be a complex abelian variety. Is $X$ a direct summand of a product of Jacobians of smooth projective complex curves? 
This question has been related to questions about cycles
on $X$ by Beckmann--de Gaay Fortman \cite{beckmann2023integral} and Voisin \cite{voisin2022cycle}. We write $\widehat{X}$ for the dual abelian variety of $X$.

\begin{thm}[Beckmann--de Gaay Fortman, Voisin]\label{thm:BdFV}
Let \(X\) be a complex abelian variety of dimension \(g\), with Poincar\'e line bundle \(\mathcal{P}\). Assume that there exists \(\theta \in \on{NS}(X)\) such that \(\deg(\theta^g / g!) = \pm 1\). The following statements are equivalent:
\begin{enumerate}
\item The abelian variety \(X\) is a direct summand of a product of Jacobians. 
\item The class of $\ch(\mathcal{P})$ in $\on{Hdg}^{2*}(X \times \widehat{X}, \mathbb{Z})$ is algebraic.
\item The class of $\theta^{g-1}/(g-1)!$ in $\on{Hdg}^{2g-2}(X, \mathbb{Z})$ is algebraic. 
\item The integral Hodge conjecture for \(1\)-cycles holds for \(X\). That is, the cycle map \(CH^{g-1}(X) \to \on{Hdg}^{2g-2}(X, \mathbb{Z})\) is surjective.
\end{enumerate}
\end{thm}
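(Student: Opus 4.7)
The plan is to establish the equivalences via the cyclic implications $(1) \Rightarrow (2) \Rightarrow (3) \Rightarrow (1)$ together with $(1) \Rightarrow (4) \Rightarrow (3)$. Each arrow uses a distinct structural ingredient of the theory of PPAVs.

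For $(1) \Rightarrow (2)$, the starting point is that the Poincar\'e bundle on $J(C) \times \widehat{J(C)}$ has algebraic Chern character, since it is constructed from the universal divisor on $C \times J(C)$; this algebraicity then passes to products by naturality and to a direct summand via the projector correspondence realizing $X$ inside the product. For $(2) \Rightarrow (3)$, I would invoke the Fourier--Mukai transform on Chow groups, which is defined once $\ch(\mc{P})$ is algebraic. The Beauville--Mukai computation gives $\mc{F}(\widehat{\theta}) = \pm\, \theta^{g-1}/(g-1)!$ as an integral equality in cohomology on a PPAV, so algebraicity of the divisor class $\widehat{\theta} \in \on{NS}(\widehat{X})$ (automatic by Lefschetz $(1,1)$) yields algebraicity of the minimal class.

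The implication $(3) \Rightarrow (1)$ is the heart of the proof, using a Matsusaka--Ran type criterion. Writing $\theta^{g-1}/(g-1)! = \sum_i n_i [C_i]$ with $n_i \in \Z$ and $C_i$ irreducible curves in $X$, take normalizations $\widetilde{C}_i \to C_i$, choose base points, and form the Albanese maps $\alpha_i \colon J(\widetilde{C}_i) \to X$. Assembling them with multiplicities produces $\pi \colon \prod_i J(\widetilde{C}_i)^{|n_i|} \to X$. The crucial point is that, because the representing cycle has class \emph{exactly} the minimal class, the pullback of the canonical product polarization along $\pi$ equals $\theta$. This forces the composition $X \xrightarrow{\phi_\theta} \widehat{X} \xrightarrow{\widehat{\pi}} \prod_i \widehat{J(\widetilde{C}_i)}^{|n_i|} \cong \prod_i J(\widetilde{C}_i)^{|n_i|} \xrightarrow{\pi} X$ to equal $\pm \id_X$, giving the desired splitting.

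Finally, $(1) \Rightarrow (4)$ follows because for each Jacobian factor $J(C_i)$ the image of the Abel--Jacobi embedding represents the minimal class, so products of Jacobians satisfy the integral Hodge conjecture for $1$-cycles; IHC for $1$-cycles then descends to the direct summand $X$ via the projector correspondence. The reverse $(4) \Rightarrow (3)$ is immediate, since the minimal class is a Hodge class and IHC forces its algebraicity. The main obstacle is $(3) \Rightarrow (1)$: one must handle signed multiplicities $n_i$, carry out a delicate integral computation of the polarization pulled back along $\pi$, and verify that the splitting exists as an actual morphism of abelian varieties rather than merely as an isogeny. The integrality of the hypothesis that the cycle class is \emph{precisely} $\theta^{g-1}/(g-1)!$, not merely a multiple thereof, is what powers these conclusions.
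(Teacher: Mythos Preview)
This theorem is quoted in the paper as prior work of Beckmann--de Gaay Fortman and Voisin, without a self-contained proof; the paper's own contribution is the generalization to arbitrary ground fields (Theorems~\ref{thm:main1-precise} and~\ref{thm:main2-precise}), whose argument over $\C$ specializes to the cycle of implications you outline. Your treatment of $(1)\Rightarrow(2)$, of $(2)\Rightarrow(3)$ via the integral Fourier transform, and of $(1)\Rightarrow(4)\Rightarrow(3)$ matches that approach.

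The step $(3)\Rightarrow(1)$ has a genuine gap. First, the assertion that ``the pullback of the canonical product polarization along $\pi$ equals $\theta$'' is directionally ill-posed: since $\pi\colon\prod_i J(\widetilde{C}_i)^{|n_i|}\to X$, the map $\pi^*$ goes from $\on{NS}(X)$ to $\on{NS}\bigl(\prod_i J(\widetilde{C}_i)^{|n_i|}\bigr)$, so the claim as written does not typecheck. Second, and more seriously, your device of taking $|n_i|$ copies cannot carry the signs. If one absorbs $\on{sign}(n_i)$ into the Albanese map on each copy, then in the composite $\pi\circ\varphi_{\theta_Y}^{-1}\circ\widehat{\pi}\circ\varphi_\theta$ the contribution of each factor is $(\pm\alpha_i)\circ\varphi_{\theta_{C_i}}^{-1}\circ(\pm\widehat{\alpha}_i)\circ\varphi_\theta$, and the sign squares away; one obtains $\sum_i|n_i|\cdot(\ldots)$ rather than $\sum_i n_i\cdot(\ldots)$. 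The paper's proof of its \Cref{prop:minimal-class-direct-summand} (generalizing \cite[Proposition~2.1]{voisin2022cycle}) resolves this by first arranging coefficients $\epsilon_j\in\{\pm1\}$ and then replacing the canonical product polarization by the \emph{signed} unimodular class $\theta_\epsilon=\sum_j\epsilon_j\on{pr}_j^*\theta_{C_j}$, which is what one must invert. The identity $\pi\circ\varphi_{\theta_\epsilon}^{-1}\circ\widehat{\pi}\circ\varphi_\theta=\pm\id_X$ is then not a polarization-pullback statement but is extracted from Matsusaka's $\delta$-endomorphisms via $\delta(\theta^{g-1}/(g-1)!,\theta)=-\deg(\theta^g/g!)\cdot\id$ and $\delta((f_j)_*[C_j],\theta)=-\overline{f}_j\circ\varphi_{\theta_{C_j}}^{-1}\circ\widehat{\overline{f}}_j\circ\varphi_\theta$ (see \cite[\S4]{achter2023prym}). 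Your architecture is correct, but the splitting does not follow from the intermediate claim as you have stated it.
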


\Cref{thm:BdFV} may be applied to any principal polarization $\theta\in NS(X)$,  or to the class $\theta\in NS(X\times\widehat{X})$ associated to the Poincar\'e line bundle $\mathcal{P}$.
Without assuming the existence of $\theta$, we only know (1)$\Leftrightarrow$(2)$\Rightarrow$(4).

The algebraicity of the class of $\ch(\mathcal{P})$ in $\on{Hdg}^{2*}(X\times \widehat{X},\Z)$ for every complex abelian variety $X$ is motivated by a question of Moonen--Polishchuk \cite{moonen2010divided} and Totaro \cite{totaro2021integral}, who asked whether the Fourier transform can be defined integrally on the Chow groups of abelian varieties over algebraically closed fields. As a corollary of \Cref{thm:BdFV}, the integral Hodge conjecture for \(1\)-cycles holds for all products of Jacobians, and it holds for all complex abelian varieties if and only if every complex abelian variety is a direct summand of a product of Jacobians. 
The question of whether properties (1), (2), (4) of \Cref{thm:BdFV} hold for every complex abelian variety $X$ is currently open; see the work of de Gaay Fortman--Schreieder \cite{dGF2024abelian} for a recent advancement.

Motivated by applications to the integral Tate conjecture ($\ell$-adic and crystalline) for abelian varieties, in this paper we set out to generalize \Cref{thm:BdFV} to an arbitrary ground field. In order to find the correct statement to generalize, an important point of departure for us was the recent construction by Petrov--Skorobogatov \cite{petrov2024spectral} of a principally polarized abelian surface $X$ over $\Q$ which is not a direct summand of a product of Jacobians of 
smooth projective $\Q$-curves which contain a $\Q$-point. Indeed, in the course of the proof, they showed that the polarization of $X$ is not represented by a line bundle on $X$. This suggested to us that the correct \emph{direct summand property} to consider, for an abelian variety $X$ over an arbitrary field $k$, should be \enquote{$X$ is a direct summand of a product of Jacobians of curves which contain a $k$-point}, and also that algebraicity of polarizations should come into the picture.

\subsection{Main result}
Let $k$ be a field, let $G\coloneqq \on{Aut}(\ov{k}/k)$, let $X$ be a smooth projective variety over a field $k$ of characteristic $p\geq 0$, and let $i\geq 0$ be an integer. For all primes $\ell\neq p$, we have a cycle map in $\ell$-adic cohomology
\begin{equation}\label{ell-adic-cycle-map}CH^i(X)_{\Z_\ell}\to H^{2i}(X_{\ov{k}}.\Z_\ell(i))^G.\end{equation}
When $p>0$, we also have a cycle map in crystalline cohomology
\begin{equation}\label{p-adic-cycle-map}CH^i(X)_{\Z_p}\to H^{2i}(X_{k_p}/W(k_p))^{F=p^i},\end{equation}
where $k_p$ is the perfect closure of $k$, $W(k_p)$ is the ring of ($p$-typical) Witt vectors of $k_p$, and the notation $F=p^{i}$ indicates the kernel of the endomorphism $F-p^{i}$, where $F$ is the absolute Frobenius endomorphism of $X$ (that is, the endomorphism of $X$ which is the identity on points and which raises regular functions to the $p$-th power). 
A cohomology class in the image of (\ref{ell-adic-cycle-map}) (when $\ell\neq p$) or in the image of (\ref{p-adic-cycle-map}) (when $\ell=p>0$) is said to be \emph{algebraic}. When $k$ is finitely generated over its prime field and perfect (that is, $k$ is either a finitely generated field of characteristic zero or it is finite), we say that {\it the integral Tate conjecture for codimension $i$-cycles holds for $X$} if the cycle maps (\ref{ell-adic-cycle-map}) and (\ref{p-adic-cycle-map}) are surjective for every prime number $\ell$.

Suppose that $X$ is an abelian variety over $k$. In this case, the integral $\ell$-adic cohomology of $X_{\ov{k}}$ and the crystalline cohomology of $X_{k_p}$ are torsion-free, and hence we have inclusions 
\[H^{2i}(X_{\ov{k}},\Z_\ell(i))\subset H^{2i}(X_{\ov{k}},\Q_\ell(i)),\qquad H^{2i}(X_{k_p}/W(k_p))\subset H^{2i}(X_{k_p}/W(k_p))[1/p].\]
For every line bundle $L$ on $X$, the Chern character of $L$ is given by
\[\ch(L)=\sum_{i\geq 0} c_1(L)^i/i!\in  CH^*(X)_{\Q}.\]
We say that $\ch(L)$ is \emph{integral} if it belongs to the image of the natural homomorphism $CH^*(X)\to CH^*(X)_{\Q}$. Although $\ch(L)$ is not necessarily integral, its classes in $\ell$-adic cohomology and crystalline cohomology are integral;
see \Cref{lem:p-adic-abelvar}(2) for the case of crystalline cohomology.

\begin{thm}\label{thm:main1}
Let $k$ be a field of characteristic $p\geq 0$, and let $X$ be a abelian variety over $k$, of dimension $g$ and with Poincar\'e line bundle $\mc{P}$. Assume that there exists $\theta\in \on{NS}(X_{\ov{k}})^G$ such that $\deg(\theta^g/g!)=\pm 1$. The following statements are equivalent:
\begin{enumerate}
\item[(1)] The abelian variety $X$ is a direct summand of a product of Jacobians of curves which contain a $k$-point.
\item[(1')] The abelian variety $X$ is a direct summand of a product of Jacobians of curves which contain a zero-cycle of degree $1$.
\item[(2)] 
The classes of $\ch(\mathcal{P})$ in $H^{2*}((X\times \widehat{X})_{\ov{k}},\Z_\ell(*))^G$ for all primes $\ell\neq p$ and 
in $H^{2*}((X\times \widehat{X})_{k_p}/W(k_p))^{F=p^*}$ for $\ell=p>0$ are algebraic.
If $k$ is finite, $\ch(\mathcal{P})$ is integral.
\item[(3)] The classes of $\theta^{g-1}/(g-1)!$ in 
$H^{2g-2}(X_{\ov{k}},\Z_\ell(g-1))^G$ for all primes $\ell\neq p$ and in $H^{2g-2}(X_{k_p}/W(k_p))^{F=p^{g-1}}$ for $\ell=p>0$ are algebraic.
\item[(3')] 
Let $i\in \{1,g-1\}$.
The classes of $\theta^i/i!$ in 
$H^{2i}(X_{\ov{k}},\Z_\ell(i))^G$ for all primes $\ell\neq p$
and in $H^{2i}(X_{k_p}/W(k_p))^{F=p^i}$ for $\ell=p>0$ are algebraic. 
\end{enumerate}
When $k$ is finitely generated and perfect, these statements are also equivalent to:
\begin{enumerate}
\item[(4)] The integral Tate conjecture for $1$-cycles holds for $X$. 
\item[(4')] The integral Tate conjecture for divisors and $1$-cycles holds for $X$. 
\end{enumerate}
\end{thm}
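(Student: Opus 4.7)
My plan is to adapt the proof of \Cref{thm:BdFV} by Beckmann--de Gaay Fortman and Voisin to arbitrary ground fields, systematically replacing Hodge theory by $\ell$-adic and crystalline cohomology and adding Galois descent at each step. The implications (1) $\Rightarrow$ (1'), (3') $\Rightarrow$ (3), and (4') $\Rightarrow$ (4) are trivial, and the content splits into three blocks: a cohomological block (2) $\Leftrightarrow$ (3) $\Leftrightarrow$ (3'), a geometric block (1') $\Leftrightarrow$ (2), and a Tate block (4') $\Leftrightarrow$ (2) valid when $k$ is finitely generated and perfect. Throughout, I would treat the $\ell$-adic ($\ell \ne p$) and crystalline ($\ell = p$) cases uniformly by appealing to formal Weil-cohomology properties (cycle map, K\"unneth, Poincar\'e duality, and compatibility with pullbacks and push-forwards), separating them only when a specifically $p$-adic or $F$-isocrystal issue arises.

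For the cohomological block, the key input is the Mumford identity $(\id \times \phi_\theta)^* c_1(\mathcal{P}) = m^*\theta - p_1^*\theta - p_2^*\theta$ in $H^2(X \times X)$, where $\phi_\theta: X \to \widehat{X}$ is the isomorphism induced by the polarization, $m$ is addition, and $p_1, p_2$ are the projections. Exponentiating expresses $(\id \times \phi_\theta)^* \ch(\mathcal{P})$ as a universal polynomial in pullbacks of $\theta$, and K\"unneth decomposition together with push-forwards to the factors translates algebraicity of $\ch(\mathcal{P})$ into algebraicity of every $\theta^i/i!$, giving (2) $\Leftrightarrow$ \enquote{$\theta^i/i!$ is algebraic for every $i$}. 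For the sharper implication (3) $\Rightarrow$ (3'), I would use the polarization isomorphism $\phi_\theta$ (defined over $k$ because $\theta$ is Galois-invariant) together with a cohomological Fourier--Mukai argument that exchanges the minimal class on $X$ with $\hat{\theta}$ on $\widehat{X}$, thereby recovering algebraicity of $\theta = \phi_\theta^* \hat{\theta}$ on $X$ from algebraicity of $\theta^{g-1}/(g-1)!$. The intermediate powers of $\theta$ are then reached via cup products and the Beauville decomposition.

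The geometric block proceeds by a Matsusaka-type argument. For (1') $\Rightarrow$ (2), a zero-cycle of degree $1$ on a curve $C$ yields a $k$-rational Abel--Jacobi morphism $C \to J(C)$, which together with the theta polarization produces an explicit algebraic representative of $\ch(\mathcal{P}_{J(C)})$ on $J(C) \times \widehat{J(C)}$; direct summand functoriality then transports this algebraicity to $X$. For (2) $\Rightarrow$ (1'), algebraicity over $k$ of $\theta^{g-1}/(g-1)!$ combined with a Bertini argument for sufficiently ample multiples of $\theta$ yields a smooth projective curve $C \subset X$ defined over $k$ whose Jacobian surjects onto $X$ with a splitting; the hyperplane intersection with the cycle representing $\theta^{g-1}/(g-1)!$ supplies the required degree-$1$ zero-cycle on $C$. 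The Tate block is handled by invoking the rational Tate conjecture for divisors on abelian varieties (Tate, Zarhin, Faltings, de Jong), whose integral refinement on $X$ (resp.\ on its dual) is precisely the algebraicity of $\theta$ (resp.\ of $\theta^{g-1}/(g-1)!$); integrality of $\ch(\mathcal{P})$ over finite fields then follows from $\ell$-adic algebraicity for all $\ell$ combined with finite generation and torsion control of Chow groups of abelian varieties over finite fields.

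\textbf{Main obstacle.} The crux is (2) $\Rightarrow$ (1'): producing a curve over $k$ with a degree-$1$ zero-cycle from algebraicity of the minimal class. Over $\bar{k}$ this is classical Matsusaka, but over $k$ one must ensure that the cycle representing $\theta^{g-1}/(g-1)!$ is genuinely defined over $k$ (and not merely Galois-fixed at the level of cohomology), that the Bertini argument can be performed over $k$, and that a degree-$1$ zero-cycle actually descends to the curve produced. This tension, invisible over $\mathbb{C}$, is precisely the obstruction isolated by Petrov--Skorobogatov. In positive characteristic, one must moreover ensure that the crystalline hypothesis is strong enough to feed into the Matsusaka construction, which will require $F$-isocrystal compatibility checks for the Poincar\'e bundle that are not present in the complex case.
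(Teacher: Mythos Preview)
Your overall architecture is close to the paper's, but two of your proposed implications have genuine gaps that the paper resolves by different, more indirect routes.

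\medskip

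\textbf{The step $(3)\Rightarrow(3')$.} You propose to recover algebraicity of $\theta$ from algebraicity of $\theta^{g-1}/(g-1)!$ via the cohomological Fourier--Mukai transform. But the Fourier transform $\alpha\mapsto p_{2*}(p_1^*\alpha\cdot\ch(\mathcal P))$ carries algebraic classes to algebraic classes only once you know that $\ch(\mathcal P)$ itself is algebraic, which is precisely statement~(2); so this is circular. The paper never proves $(3)\Rightarrow(3')$ cohomologically. Instead it passes through the geometric statement: first $(3)\Rightarrow(1)$ by a Matsusaka-type construction (see below), and then $(1)\Rightarrow(3')$ using a separate N\'eron--Severi descent result: if $n_X$ factors through a product of Jacobians of curves with a degree-$1$ zero-cycle for some odd $n$, then the pullback $\on{NS}(X)\to\on{NS}(X_{\ov k})^G$ is an isomorphism. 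This descent result is not visible in your plan and is exactly what is needed to know that $\theta$ comes from an honest divisor on $X$; only then can Beauville's formula be applied to algebraic cycles.

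\medskip

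\textbf{The step $(2)/(3)\Rightarrow(1')$.} Your plan to produce the curve by ``Bertini for sufficiently ample multiples of $\theta$'' runs into two obstacles. First, $\theta$ need not lift to $\on{Pic}(X)$---this is precisely the Petrov--Skorobogatov obstruction you cite---so there may be no linear system to apply Bertini to (and one cannot simply pass to $2\theta$ without losing control of the resulting multiple). Second, even when a line bundle exists, a complete intersection curve has class $n^{g-1}\theta^{g-1}$, and the Matsusaka argument then only factors $\bigl(n^{g-1}(g-1)!\bigr)_X$ through a Jacobian, not $1_X$. The paper's route is quite different: from algebraicity of $\theta^{g-1}/(g-1)!$ in every cohomology one extracts, via $\Q\cap\Z_\ell=\Z_{(\ell)}$ and the fact that homological and numerical equivalence agree for $1$-cycles on abelian varieties, that the class already lies in the image of the integral map $CH^{g-1}(X)\to(CH^{g-1}(X_{\ov k})/\mathrm{num})_\Q$. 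A separate moving result then shows that \emph{any} $1$-cycle on a smooth projective geometrically integral $k$-variety with a $k$-point is rationally equivalent to a $\Z$-combination of geometrically integral curves each containing a smooth $k$-point; this is proved by blowing up the support, applying Bertini on the blow-up, and pushing back down. Feeding these curves into the $\delta$-calculus of Matsusaka (in the form of Achter--Casalaina-Martin) yields a factorization of $1_X$ through the product of their Jacobians, giving (1) directly and hence also~(1').
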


\Cref{thm:main1} may be applied to any principal polarization $\theta\in NS(X_{\ov{k}})^G$, or to the class $\theta\in NS((X\times \widehat{X})_{\ov{k}})^G$ associated to $\mathcal{P}$. Without assuming the existence of $\theta$, we prove that (1)$\Leftrightarrow$(1')$\Leftrightarrow$(2), and when $k$ is finitely generated and perfect, that (1)$\Rightarrow$(4')$\Rightarrow$(4). 
Observe that the analogues of (3') and (4') do not appear in \Cref{thm:BdFV}: indeed, by the Lefschetz $(1,1)$-theorem, they are equivalent to (3) and (4), respectively.

We will prove more precise versions of \Cref{thm:main1}, formulated for 
a given set $S$ of prime numbers $\ell$; see Theorems \ref{thm:main1-precise} and \ref{thm:main2-precise}. (One recovers \Cref{thm:main1} by letting $S$ be the set of all prime numbers.)

\Cref{thm:main1}(1)$\Leftrightarrow$(3')
(see also \Cref{lem:neron-severi})
shows that not only the non-algebraicity of a minimal class, but also that of a polarization obstructs the direct summand property. This refines the obstruction to the direct summand 
property over $\Q$ due to Petrov--Skorobogatov. Jacobians over global fields, whose canonical polarizations are not algebraic, have been classically studied; for instance, see the paper \cite{poonen1999cassels} of Poonen--Stoll. Combining \Cref{thm:main1} and \cite[Section 10]{poonen1999cassels}, one obtains many new examples of Jacobians $X$ over global fields $k$ (including $\F_q(t)$ with $q$ odd) of arbitrarily large dimensions, which are not direct summands of products of Jacobians of curves which contain a $k$-point. Since the obstruction to the algebraicity of a polarization of an abelian variety over a field is $2$-primary torsion (cf. \Cref{lem:neron-severi-sep}(3)), it follows that for such $X$ the class of 
$\theta^{g-1}/(g-1)!$ in $H^{2g-2}(X_{\ov{k}},\Z_2(g-1))^G$ is not algebraic.

Over finite fields, the equivalence \Cref{thm:main1}(1')$\Leftrightarrow$(2), together with the Lang--Weil estimate, shows that for a product $X$ of Jacobians  $\ch(\mathcal{P}) \in CH^*(X \times \widehat{X})_\Q$ is integral. Over infinite fields, the analogous statement may fail for a Jacobian of a curve which contains a rational point; see \cite[Example 3.1]{moonen2010divided}.

\subsection{Applications to the integral Tate conjecture for abelian varieties}

An important feature of \Cref{thm:main1} is that it addresses a $p$-adic version of the integral Tate conjecture over finite fields of characteristic $p$, for the first time beyond the codimension one case.


By a theorem of Tate \cite{tate1966endomorphisms}, the integral Tate conjecture for divisors holds for all abelian varieties over finite fields. From \Cref{thm:main1}, we deduce the following.

\begin{cor}\label{cor:main}
Let $\F_q$ be a finite field.
The integral Tate conjecture for $1$-cycles holds for all abelian varieties over $\F_q$ if and only if every abelian variety over $\F_q$ is a direct summand of a product of Jacobians.
\end{cor}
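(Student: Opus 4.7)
The plan is to derive both implications from \Cref{thm:main1}, using Zarhin's trick to handle abelian varieties that do not admit an $\F_q$-rational principal polarization.

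For the implication $(\Leftarrow)$, suppose every abelian variety over $\F_q$ is a direct summand of a product of Jacobians. Given such an $X$, condition (1) of \Cref{thm:main1} is satisfied. Since \Cref{thm:main1} establishes (1)$\Rightarrow$(4) over finitely generated perfect fields \emph{without} requiring the existence of a class $\theta$ with $\deg(\theta^g/g!)=\pm 1$, the integral Tate conjecture for $1$-cycles holds for $X$.

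For the converse $(\Rightarrow)$, suppose the integral Tate conjecture for $1$-cycles holds for every abelian variety over $\F_q$, and let $X$ be an abelian variety over $\F_q$ of dimension $g$. The difficulty is that $X$ itself need not admit an $\F_q$-rational principal polarization, so \Cref{thm:main1} does not apply to $X$ directly. I would circumvent this via Zarhin's trick: the abelian variety $Y \coloneqq (X\times \widehat{X})^4$, of dimension $8g$, carries a canonical principal polarization defined over $\F_q$, furnishing a class $\theta\in NS(Y_{\ov{\F_q}})^G$ with $\deg(\theta^{8g}/(8g)!)=1$. By our assumption applied to the abelian variety $Y$, the integral Tate conjecture for $1$-cycles holds for $Y$, so $Y$ satisfies condition (4) of \Cref{thm:main1}. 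The implication (4)$\Rightarrow$(1) then shows that $Y$ is a direct summand of a product of Jacobians of curves containing an $\F_q$-point. Since $X$ is a direct summand of $Y$ as an abelian variety (via the obvious projections), $X$ inherits the same property.

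The main—and really the only non-formal—step is enforcing the principal-polarization hypothesis of \Cref{thm:main1} for an arbitrary $X$; this is precisely what Zarhin's trick provides. Once this reduction is made, the corollary follows immediately from the equivalences already recorded in \Cref{thm:main1}.
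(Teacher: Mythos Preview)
Your argument is correct, but the paper takes a slightly more economical route for the $(\Rightarrow)$ direction. Rather than invoking Zarhin's trick to pass to $(X\times\widehat X)^4$, the paper observes that $X\times\widehat X$ already carries a class with the required property: the first Chern class $\theta=c_1(\mc P)\in \on{NS}((X\times\widehat X)_{\ov{\F_q}})^G$ of the Poincar\'e bundle satisfies $\deg(\theta^{2g}/(2g)!)=(-1)^g=\pm1$ (see \cite[Corollary~1 p.~129 and the Riemann--Roch Theorem p.~150]{mumford1970abelian}). Thus \Cref{thm:main1} applies directly to $X\times\widehat X$, and one concludes exactly as you do, since $X$ is a direct summand of $X\times\widehat X$. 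Your Zarhin approach works equally well; it simply passes through a larger auxiliary variety than necessary.

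One small point you should make explicit: the hypothesis in the corollary speaks only of ``a product of Jacobians'' with no condition on rational points, whereas conditions (1) and (1') of \Cref{thm:main1} require the curves to contain a $k$-point or a zero-cycle of degree $1$. Over $\F_q$ this gap is harmless because, by the Lang--Weil estimate, every smooth projective geometrically integral curve has a zero-cycle of degree $1$; the paper records this at the start of its proof.
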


The validity of the integral Tate conjecture for $1$-cycles and of the direct summand property for all abelian varieties over $\F_q$ remain open. Using that 
the validity of the equivalent statements in \Cref{thm:main1} is invariant under Weil restrictions along finite separable extensions of the base field (see \Cref{lem:weil-restrictions-precise}), we show:

\begin{cor}\label{thm:tate-finite-wr-jacobians}
Let $\F_q$ be a finite field, let $\F_{q'}/\F_q$ be a finite extension, and let $X'$ be a product of Jacobians over $\F_{q'}$.
The Weil restriction $X\coloneqq \on{Res}_{\F_{q'}/\F_q}(X')$ is a direct summand of a product of Jacobians of curves which contain an $\F_q$-point.
In particular, $\ch(\mathcal{P})\in CH^{*}(X\times \widehat{X})_\Q$ is integral, and the integral Tate conjecture for $1$-cycles holds for $X$.
\end{cor}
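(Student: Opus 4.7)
The plan is to verify that $X'$ satisfies condition (1) of \Cref{thm:main1} over $\F_{q'}$, invoke \Cref{lem:weil-restrictions-precise} to transfer this property to $X$ over $\F_q$, and appeal to \Cref{thm:main1} once more to extract the cohomological consequences.

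First, since $X'$ is a product of Jacobians over $\F_{q'}$, it carries a canonical principal polarization, providing a class $\theta' \in \on{NS}(X'_{\ov{\F_q}})^{\Gal(\ov{\F_q}/\F_{q'})}$ with $\deg((\theta')^g/g!) = 1$, so the hypothesis of \Cref{thm:main1} is satisfied for $X'$. Moreover, $X'$ is trivially a direct summand of itself, and every smooth projective geometrically connected curve over a finite field admits closed points of coprime degrees by Lang--Weil, hence carries a zero-cycle of degree $1$. Therefore $X'$ satisfies condition (1') of \Cref{thm:main1} over $\F_{q'}$, and by the equivalence (1)$\Leftrightarrow$(1') proved in that theorem, also condition (1).

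Next, I would apply \Cref{lem:weil-restrictions-precise}, according to which the validity of the conditions in \Cref{thm:main1} is preserved under Weil restriction along the finite separable extension $\F_{q'}/\F_q$, so that $X = \on{Res}_{\F_{q'}/\F_q}(X')$ satisfies (1) over $\F_q$; this is the first assertion of the corollary. For the cohomological conclusions, I would observe that $X_{\ov{\F_q}} \simeq \prod_{\sigma \in \Gal(\F_{q'}/\F_q)} (X')^{\sigma}_{\ov{\F_q}}$, and that the product of the canonical principal polarizations on the factors is Galois-invariant, furnishing a class $\theta \in \on{NS}(X_{\ov{\F_q}})^G$ with $\deg(\theta^g/g!) = 1$. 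Hence \Cref{thm:main1} applies to $X$ over $\F_q$, and the implications (1)$\Rightarrow$(2) and (1)$\Rightarrow$(4) yield both the integrality of $\ch(\mathcal{P}) \in CH^*(X \times \widehat{X})_\Q$ and the integral Tate conjecture for $1$-cycles on $X$.

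The main obstacle in this plan is entirely absorbed by \Cref{lem:weil-restrictions-precise}: once that lemma is in hand, the rest is essentially a formal combination of the trivial instance of (1') for a product of Jacobians over a finite field with the main theorem.
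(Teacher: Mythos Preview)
Your proof is correct and follows essentially the same route as the paper: verify the direct summand property for $X'$ via Lang--Weil, transfer it to $X$ using \Cref{lem:weil-restrictions-precise}, and read off the consequences from the main theorem. The only superfluous step is your explicit construction of a principal polarization $\theta$ on $X$: as remarked after \Cref{thm:main1}, the implications $(1)\Rightarrow(2)$ and $(1)\Rightarrow(4')\Rightarrow(4)$ hold without the hypothesis on $\theta$, so once you know $X$ satisfies $(1)$ the conclusions on $\ch(\mathcal{P})$ and the integral Tate conjecture follow immediately.
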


\begin{cor}\label{thm:tate-finite-Fp}
If the integral Tate conjecture for $1$-cycles holds for all principally polarized abelian varieties over $\F_p$, then it holds for all abelian varieties over any finite field of characteristic $p$.
\end{cor}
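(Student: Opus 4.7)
The plan is to combine Zarhin's trick with Weil restriction to reduce to the principally polarized case over $\F_p$, and then invoke \Cref{thm:main1} together with \Cref{lem:weil-restrictions-precise}. Let $X$ be an abelian variety over a finite field $\F_q$ of characteristic $p$; the goal is to show the integral Tate conjecture for $1$-cycles holds for $X$.

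First, by Zarhin's trick, $B \coloneqq (X \times \widehat{X})^4$ admits a principal polarization over $\F_q$. Set $B' \coloneqq \on{Res}_{\F_q/\F_p}(B)$. Since the formation of the dual abelian variety commutes with Weil restriction along the finite separable extension $\F_q/\F_p$, a principal polarization $\lambda \colon B \xrightarrow{\sim} \widehat{B}$ induces an isomorphism $\on{Res}_{\F_q/\F_p}(\lambda) \colon B' \xrightarrow{\sim} \widehat{B'}$, giving $B'$ the structure of a principally polarized abelian variety over $\F_p$.

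By hypothesis, the integral Tate conjecture for $1$-cycles holds for $B'$. Since the principal polarization supplies a class $\theta \in \on{NS}(B'_{\ov{\F_p}})^G$ with $\deg(\theta^{g'}/g'!) = 1$, where $g' = \dim B'$, the implication (4)$\Rightarrow$(1) of \Cref{thm:main1} shows that $B'$ is a direct summand of a product of Jacobians of curves containing an $\F_p$-point. By \Cref{lem:weil-restrictions-precise}, the equivalent properties of \Cref{thm:main1} are invariant under Weil restriction, so $B$ is a direct summand of a product of Jacobians of curves containing an $\F_q$-point. As $X$ is a direct summand of $B$ as an abelian variety, $X$ inherits this property, and then the implication (1)$\Rightarrow$(4) of \Cref{thm:main1}, which is valid without any hypothesis on the existence of $\theta$, yields the integral Tate conjecture for $1$-cycles on $X$.

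The main technical point is the compatibility of Weil restriction with the formation of the dual abelian variety along separable extensions, used to transport the principal polarization from $B$ to $B'$; beyond this, the argument is a formal assembly of the equivalences in \Cref{thm:main1} and the functoriality encoded in \Cref{lem:weil-restrictions-precise}.
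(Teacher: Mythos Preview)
Your proof is correct and follows essentially the same strategy as the paper: combine Zarhin's trick with \Cref{lem:weil-restrictions-precise} to reduce to principally polarized abelian varieties over $\F_p$, then invoke the equivalences of \Cref{thm:main1}. The only difference is the order of operations---the paper applies Weil restriction first and then Zarhin's trick over $\F_p$, which sidesteps the (true, but not entirely trivial) verification you flag that Weil restriction along a separable extension takes a principal polarization to a principal polarization.
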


Finally, using a result of Schoen \cite{schoen1998integral}, we prove:

\begin{cor}\label{thm:direct-summand-tate-consequence}
Assume that the Tate conjecture for divisors holds for all surfaces over every finite field of characteristic $p$.
Then, for every abelian variety over $\ov{\F}_p$, there exists an integer $n\geq 0$ such that the multiplication-by-$p^n$ map $(p^n)_X\colon X\to X$ factors through a product of Jacobians over $\ov{\F}_p$.
\end{cor}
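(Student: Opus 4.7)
The plan is to apply an $S$-refinement of \Cref{thm:main1} to a finite-field descent of $X$, where $S$ is the set of primes different from $p$, using a theorem of Schoen to supply the required integral Tate conjecture at these primes.

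First I would use Zarhin's trick: $Y\coloneqq (X\times \widehat{X})^4$ admits a principal polarization over $\ov{\F}_p$. Since $X$ is a direct summand of $Y$ as an abelian variety (via the inclusion of the first factor and the corresponding projection), a factorization $(p^n)_Y=g\circ f$ through a product of Jacobians immediately yields, by pre- and post-composing with these two maps, a factorization of $(p^n)_X$ through the same product. Hence we may replace $X$ by $Y$ and assume that $X$ is itself principally polarized.

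Next, descend $X$ together with its principal polarization to a principally polarized abelian variety $X_0$ over some finite subfield $\F_q\subset \ov{\F}_p$. By Schoen's theorem \cite{schoen1998integral}, the hypothesis that the Tate conjecture for divisors holds for all surfaces over finite fields of characteristic $p$ implies that the cycle map
\[ CH^{g-1}(X_0)_{\Z_\ell}\longrightarrow H^{2g-2}((X_0)_{\ov{\F}_q},\Z_\ell(g-1))^{\Gal(\ov{\F}_q/\F_q)} \]
is surjective for every prime $\ell\neq p$. We then apply the $S$-refinement of \Cref{thm:main1} (namely \Cref{thm:main1-precise}) with $S\coloneqq \{\ell\text{ prime}: \ell\neq p\}$: the $\ell$-adic integral Tate conjecture for $1$-cycles on $X_0$ at every $\ell\in S$, together with the existence of a principal polarization, should produce morphisms $f\colon X_0\to \prod_i J(C_i)$ and $g\colon \prod_i J(C_i)\to X_0$ of abelian varieties over $\F_q$ with $g\circ f=(p^n)_{X_0}$ for some $n\geq 0$. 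Base-changing along $\F_q\subset \ov{\F}_p$ yields the required factorization of $(p^n)_X$.

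The main obstacle is correctly invoking the precise $S$-refinement of \Cref{thm:main1}: one needs to verify that omitting the crystalline integral Tate condition at $\ell=p$ from the hypothesis weakens the conclusion precisely to a factorization of a power-of-$p$ multiplication map, rather than of the identity. The absence of the crystalline cycle-class input at $\ell=p$ is exactly what allows—and forces—the auxiliary factor $p^n$ in the conclusion.
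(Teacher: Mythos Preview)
Your overall strategy---reduce to a situation with a suitable $\theta$, invoke Schoen's theorem, and feed the resulting algebraicity into the $S$-refined direct summand criterion with $S=\{\ell:\ell\neq p\}$---is the same as the paper's. The use of Zarhin's trick in place of the paper's passage to $X\times\widehat{X}$ (with the Poincar\'e class, which satisfies $\deg(\theta^{2g}/(2g)!)=\pm1$ without being ample) is a harmless variant.

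There is, however, a genuine gap in your invocation of Schoen. Schoen's theorem is a statement over $\ov{\F}_q$: it asserts surjectivity of
\[
CH^{g-1}(X_{\ov{\F}_q})_{\Z_\ell}\longrightarrow H^{2g-2}(X_{\ov{\F}_q},\Z_\ell(g-1))^{(1)},
\]
not of the finite-level map $CH^{g-1}(X_0)_{\Z_\ell}\to H^{2g-2}((X_0)_{\ov{\F}_q},\Z_\ell(g-1))^{\Gal(\ov{\F}_q/\F_q)}$ you wrote down. A geometric cycle representing a given Tate class descends to \emph{some} finite extension $\F_{q_\ell}$, but this extension depends on $\ell$, and a Galois-averaging trick to bring it down to $\F_q$ introduces a factor divisible by $\ell$ whenever $\ell\mid[\F_{q_\ell}:\F_q]$. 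Since $S$ contains infinitely many primes, you cannot pass to a single finite field that works for all $\ell\in S$, so your step ``apply \Cref{thm:main1-precise} over $\F_q$'' does not go through as stated.

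The paper handles exactly this point by not working at the finite level at all: it applies the implication $(2_{fg_s})\Rightarrow(1_{fg_s})$ of \Cref{thm:main2-precise}, whose hypothesis is precisely the $\ov{\F}_q$-level surjectivity that Schoen provides, and whose conclusion is already the desired statement for $X_{\ov{\F}_p}$. No descent-then-base-change is needed. If you want to salvage your write-up, replace the finite-level application by this $k_s$-version; the rest of your outline then goes through. (Also note that you cite \Cref{thm:main1-precise} but describe a Tate-conjecture hypothesis that actually lives in \Cref{thm:main2-precise}.)
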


\subsection{Structure of the paper}
The paper is organized as follows. \Cref{section:abelvar} contains preliminaries on N\'eron--Severi groups and crystalline cohomology of abelian varieties.
In \Cref{sec:algebraicity-minimal}, we prove \Cref{prop:geom-irr-rep}, which shows that every cycle on a smooth projective geometrically integral variety over a field is rationally equivalent to a finite sum of geometrically integral subvarieties;
we then use it to deduce \Cref{prop:minimal-class-direct-summand}, which generalizes \cite[Proposition 2.1]{voisin2022cycle}.
In \Cref{section:neron}, we prove \Cref{lem:neron-severi}, which shows that every polarization of a product of Jacobians of curves which contain a zero-cycle of degree $1$ is algebraic.
\Cref{section:main} is devoted to the proofs of \Cref{thm:main1-precise} and \Cref{thm:main2-precise}, which are more precise versions of \Cref{thm:main1}.
We also deduce \Cref{cor:main} and \Cref{thm:direct-summand-tate-consequence}. Finally, in \Cref{section:weil-restrictions}, we prove \Cref{lem:weil-restrictions-precise} on the behavior of the direct summand property under Weil restrictions, and then prove \Cref{thm:tate-finite-wr-jacobians} and \Cref{thm:tate-finite-Fp}.

\subsection{Notation}\label{notation}
Let $k$ be a field. We let $\ov{k}$ be an algebraic closure of $k$, and we let $k_s\subset \ov{k}$ and $k_p\subset \ov{k}$ be the separable closure and the perfect closure of $k$ inside $\ov{k}$, respectively. We set $G\coloneqq \on{Aut}(\ov{k}/k)$; recall that we have canonical isomorphisms of profinite groups $G\xrightarrow{\sim}\Gal(\ov{k}/k_p)$ and $G\xrightarrow{\sim}\Gal(k_s/k)$.

A $k$-variety is a separated $k$-scheme of finite type. Let $X$ be a smooth projective $k$-variety $X$. For every prime number $\ell$ invertible in $k$, we let $H^i(X_{\ov{k}},\Z_\ell(j))$ be the $\ell$-adic \'etale cohomology groups of $X_{\ov{k}}$. When $k$ is perfect of positive characteristic, we write $H^{*}(X/W(k))$ for the crystalline cohomology of $X$. We denote by $Z^i(X)$ the group of codimension $i$-cycles on $X$, by $CH^i(X)$ the Chow group of codimension $i$ cycles on $X$, by $CH^i(X)/\mathrm{hom}$ its quotient modulo the subgroup of homologically trivial cycles (i.e. the cycles that lie in the kernel of the cycle maps (\ref{ell-adic-cycle-map}) and (\ref{p-adic-cycle-map})), and by $CH^i(X)/\mathrm{num}$ its quotient modulo the subgroup of numerically trivial cycles (i.e. the cycles $\alpha\in CH^i(X)$ such that for all $\beta\in CH_i(X)$ we have $\deg(\alpha\cdot\beta)=0$).

A $k$-group is a group scheme locally of finite type over $k$. An abelian variety over $k$ is a proper (equivalently, projective) connected $k$-group. A Jacobian over $k$ is an abelian variety over $k$ which is isomorphic to the Jacobian of a smooth projective geometrically integral $k$-curve. For an abelian variety $X$ over $k$ and an integer $n\in \Z$, we write $n_X\colon X\rightarrow X$ for the multiplication-by-$n$ map.

For a $\Z$-algebra $R$ and a $\Z$-module $M$, we let $M_R\coloneqq M\otimes_{\Z}R$.
For every non-zero $n\in \Z$, we let $M[1/n]\coloneqq M_{\Z[1/n]}$,
and for a prime ideal $\mathfrak{p}$ of $\Z$, we let $M_{\mathfrak{p}}\coloneqq M_{\Z_{\mathfrak{p}}}$.

\section{Preliminaries on 
abelian varieties}\label{section:abelvar}

\subsection{N\'eron--Severi groups}

Let $k$ be a field and let $X$ be an abelian variety over $k$. We have a short exact sequence of $k$-groups
\begin{equation}\label{pic-sequence}0\to \mathbf{Pic}_{X/k}^0\to \mathbf{Pic}_{X/k}\to \mathbf{NS}_{X/k}\to 0,\end{equation}
where $\mathbf{Pic}_{X/k}$ is the Picard scheme of $X$ (see \cite[Definition 4.1, Theorem 4.8]{kleiman2005picard}), the identity component $\mathbf{Pic}_{X/k}^0$ is an abelian variety, called the dual abelian variety $\widehat{X}$ of $X$ (see \cite[Remark 5.24]{kleiman2005picard}), and the fppf quotient $\mathbf{NS}_{X/k}$ is an \'etale $k$-group (see \cite[Expos\'e VIA, Proposition 5.5.1]{sga3I}). In particular, the $k$-group $\mathbf{Pic}_{X/k}$ is smooth. The formation of (\ref{pic-sequence}) commutes with arbitrary field extensions $k'/k$.

Since $X(k)\neq \emptyset$, the natural map $a\colon \on{Pic}(X)\to \mathbf{Pic}_{X/k}(k)=\on{Pic}(X_{k_s})^G$ is an isomorphism. We obtain a commutative diagram with exact rows 
\begin{equation}\label{pic-k-points}
\begin{tikzcd}
    0\arrow[r] & \on{Pic}^0(X) \arrow[r] \arrow[d,"\wr"] & \on{Pic}(X) \arrow[r]\arrow[d,"a"',"\wr"] & \on{NS}(X) \arrow[r] \arrow[d,hook] & 0 \\
     0\arrow[r] & \mathbf{Pic}_{X/k}^0(k) \arrow[r] & \mathbf{Pic}_{X/k}(k) \arrow[r] & \mathbf{NS}_{X/k}(k) \arrow[r] & H^1(k,\mathbf{Pic}_{X/k}^0), 
\end{tikzcd}
\end{equation}
where we set $\on{Pic}^0(X)\coloneqq a^{-1}(\mathbf{Pic}_{X/k}^0(k))$ and $\on{NS}(X)\coloneqq \on{Pic}(X)/\on{Pic}^0(X)$. Thus $\on{Pic}^0(X)\subset \on{Pic}(X)$ is the subgroup of algebraically trivial line bundles on $X$ and, when $k$ is algebraically closed, $\on{NS}(X)$ coincides with the classical N\'eron--Severi group of $X$.

For all abelian varieties $A$ and $B$ over $k$, the Hom functor $\mathbf{Hom}_k(A,B)$ is represented by a $k$-group scheme (see \cite[Corollary 4.2.4]{vandobben2018dominating}) which is locally of finite type (see \cite[Lemma 4.2.2]{vandobben2018dominating}), unramified (see \cite[Lemma 4.2.5]{vandobben2018dominating}), and hence \'etale (cf. \cite[Tag 02GU $(2)\Leftrightarrow(3)$]{stacks-project}). For every $k$-scheme $S$ and every line bundle $L$ on $X_S=X\times_kS$, we have a homomorphism $\varphi_L\colon X_S\to \widehat{X}_S$ defined as follows: for every $S$-scheme $S'$ and every $x\in X(S')$, we have $\varphi_L(x) = T_x^*(L_{S'})-L_{S'}$, where $L_{S'}$ is the base change of $L$ to $X_{S'}$ and $T_x\colon \widehat{X}_{S'}\to \widehat{X}_{S'}$ denotes the map given by translation by $x$. By sheafification, this construction determines a $k$-group homomorphism $\mathbf{Pic}_{X/k}\to \mathbf{Hom}(X,\widehat{X})$ given by $L\mapsto \varphi_L$. Since $\mathbf{Hom}(X,\widehat{X})$ is \'etale, this homomorphism is trivial on $\mathbf{Pic}_{X/k}^0$, and hence factors through a homomorphism of \'etale $k$-groups
\begin{equation}\label{phi-eq}\mathbf{NS}_{X/k}\to \mathbf{Hom}(X,\widehat{X}).
\end{equation}
The homomorphism (\ref{phi-eq}) is injective: for every $L\in \on{Pic}(X_{\ov{k}})$, the homomorphism $\varphi_L$ is equal to zero if and only if $L$ lies in $\on{Pic}^0(X_{\ov{k}})$; see \cite[p. 74]{mumford1970abelian}. (While in \emph{loc. cit.} it is assumed that $\on{char}(k)=0$, this is not necessary: the statement is a direct consequence of the Seesaw theorem \cite[Corollary 6 p. 54]{mumford1970abelian}, which holds in arbitrary characteristic.) We let \[\mathbf{Hom}^{\mathrm{sym}}(X,\widehat{X})\subset \mathbf{Hom}(X,\widehat{X})\] be the closed $k$-subgroup of $\mathbf{Hom}^{\mathrm{sym}}(X,\widehat{X})$ consisting of symmetric homomorphisms, that is, the homomorphisms $\varphi\colon X\to\widehat{X}$ such that $\varphi=\widehat{\varphi}\circ\kappa_X$, where $\kappa_X\colon X\xrightarrow{\sim} \doublehat{X}$ is the canonical isomorphism between $X$ and its double dual. The following result is standard.

\begin{lem}\label{lem-sym}
    Let $k$ be a field and let $X$ be an abelian variety over $k$. The image of the homomorphism (\ref{phi-eq}) is equal to $\mathbf{Hom}^{\mathrm{sym}}(X,\widehat{X})$.
\end{lem}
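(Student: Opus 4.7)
Both the source $\mathbf{NS}_{X/k}$ and target $\mathbf{Hom}(X,\widehat{X})$ of (\ref{phi-eq}) are \'etale $k$-groups, and $\mathbf{Hom}^{\mathrm{sym}}(X,\widehat{X})\subset \mathbf{Hom}(X,\widehat{X})$ is a closed $k$-subgroup (it is cut out by the equation $\varphi=\widehat{\varphi}\circ\kappa_X$, which makes sense for any $S$-point). The plan is therefore to reduce to a statement on $\ov{k}$-points: it suffices to prove that the image of the induced homomorphism
\[\on{NS}(X_{\ov{k}})=\mathbf{NS}_{X/k}(\ov{k})\longrightarrow \mathbf{Hom}(X,\widehat{X})(\ov{k})=\on{Hom}(X_{\ov{k}},\widehat{X}_{\ov{k}})\]
equals the subgroup $\on{Hom}^{\mathrm{sym}}(X_{\ov{k}},\widehat{X}_{\ov{k}})$. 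Once this is established, equality of the underlying sheaves on $(\mathrm{Sch}/k)_{\mathrm{\acute{e}t}}$, and hence of the $k$-group schemes, follows from faithfully flat descent along $\ov{k}/k$.

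For the inclusion $\Image\subset \mathbf{Hom}^{\mathrm{sym}}$, I would verify that for every line bundle $L$ on $X_{\ov{k}}$ the homomorphism $\varphi_L$ is symmetric. This is a classical consequence of the symmetry of the Poincar\'e line bundle $\mc{P}$ on $X\times \widehat{X}$: if one pulls back $\mc{P}$ under $(\id_X,\varphi_L)\colon X\to X\times\widehat{X}$ one obtains a line bundle whose two associated homomorphisms $X\to\widehat{X}$ are $\varphi_L$ and $\widehat{\varphi_L}\circ\kappa_X$, respectively, and these two constructions agree because $\mc{P}$ is interchanged with itself under the canonical identification $(X\times\widehat{X})\simeq (\widehat{X}\times \doublehat{X})$ coming from $\kappa_X$. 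See, e.g., Mumford \cite[\S 13, Theorem~1 and Corollary on p.~131]{mumford1970abelian}.

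For the reverse inclusion $\mathbf{Hom}^{\mathrm{sym}}\subset \Image$, I appeal to the classical theorem of Mumford that the map $\on{NS}(Y)\to \on{Hom}^{\mathrm{sym}}(Y,\widehat{Y})$, $L\mapsto \varphi_L$, is an isomorphism for every abelian variety $Y$ over an algebraically closed field; see \cite[\S 20, p.~188, Theorem~2 and its Corollary]{mumford1970abelian}. Applied with $Y=X_{\ov{k}}$, this gives the surjectivity needed on $\ov{k}$-points, completing the proof. The main subtlety, rather than an obstacle, is simply to check that the intrinsic notion of symmetry used to define the closed subgroup $\mathbf{Hom}^{\mathrm{sym}}(X,\widehat{X})$ agrees, after base change to $\ov{k}$, with Mumford's notion involving the canonical isomorphism $\kappa_{X_{\ov{k}}}$; this is immediate because the formation of $\kappa_X$ commutes with arbitrary base change.
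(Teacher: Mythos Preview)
Your proposal is correct and follows essentially the same strategy as the paper: reduce to $\ov{k}$-points of the \'etale group schemes involved and appeal to the results in \S 20 of \cite{mumford1970abelian}. The only notable difference is that the paper unpacks the reverse inclusion explicitly---showing via the $\ell$-adic pairing that $\varphi$ is symmetric if and only if $E_\varphi$ is skew-symmetric, and then invoking \cite[Theorem~2 p.~188 and Remark p.~189]{mumford1970abelian}---whereas you cite the isomorphism $\on{NS}(Y)\xrightarrow{\sim}\Hom^{\mathrm{sym}}(Y,\widehat{Y})$ as a single classical fact; since Mumford's Theorem~2 is actually phrased in terms of skew-symmetry of $E_\varphi$ rather than symmetry of $\varphi$, your citation is slightly loose, and that equivalence is precisely the extra computation the paper supplies.
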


\begin{proof}
    We may assume that $k$ is algebraically closed. Furthermore, it is enough to prove the conclusion at the level of $k$-points. Let $\ell$ be a prime invertible in $k$, and let $T_\ell(X)\coloneqq \varprojlim_n X[\ell^n]$ be the $\ell$-adic Tate module of $X$. We have a natural $\Z_\ell$-bilinear pairing $e_\ell\colon T_\ell(X)\times T_\ell(\widehat{X})\to \Z_\ell(1)$; see \cite[p. 186]{mumford1970abelian}. For every homomorphism $\varphi\colon X\to \widehat{X}$, consider the $\Z_\ell$-bilinear pairing $E_\varphi\colon T_\ell(X)\times T_\ell(X)\to \Z_\ell(1)$ given by $E_\varphi(x,y)=e_\ell(x,T_\ell(\varphi)y)$. A direct computation shows that $\varphi$ is symmetric if and only if $E_\varphi$ is skew-symmetric. (For this, one must use the fact that, for all abelian varieties $A$ and $B$ over $k$, the natural map $\on{Hom}(A,B)_{\Z_\ell}\to \on{Hom}_{\Z_\ell}(T_\ell(A),T_\ell(B))$ is injective; see \cite[Theorem 3 p. 176]{mumford1970abelian}.)
    On the other hand, by \cite[Theorem 2 p. 188]{mumford1970abelian}, the pairing $E_\varphi$ is skew-symmetric if and only if $2\varphi=\varphi_L$ for some line bundle $L$ on $X$. By \cite[Remark p. 189]{mumford1970abelian}, the latter is equivalent to $\varphi=\varphi_{L'}$ for some line bundle $L'$ on $X$. In conclusion, $\varphi$ is symmetric if and only if $\varphi=\varphi_{L'}$ for some line bundle $L'$ on $X$, as desired.
\end{proof}

By \Cref{lem-sym}, the homomorphism (\ref{phi-eq}) induces an isomorphism
\begin{equation}\label{phi-eq-sym}\mathbf{NS}_{X/k}\xrightarrow{\sim}\mathbf{Hom}^{\mathrm{sym}}(X,\widehat{X}).
\end{equation}
 We also  let
\[\Hom^{\mathrm{sym}}(X,\widehat{X})\coloneqq \mathbf{Hom}^{\mathrm{sym}}(X,\widehat{X})(k)\subset \on{Hom}(X,\widehat{X})\]
be the image of (\ref{phi-eq}) at the level of $k$-points.

\begin{lem}\label{lem:neron-severi-sep}
Let $k$ be a field and let $X$ be an abelian variety over $k$. 

\begin{enumerate}
    \item The pullback map $\on{NS}(X_{k_s})\to \on{NS}(X_{\ov{k}})$ is an isomorphism.
    \item The pullback map $\mathbf{NS}_{X/k}(k)\to \mathbf{NS}_{X/k}(\ov{k})=\on{NS}(X_{\ov{k}})$ identifies $\mathbf{NS}_{X/k}(k)$ with $\on{NS}(X_{\ov{k}})^G$. In particular, taking $k$-points in (\ref{phi-eq-sym}) yields an isomorphism
    \begin{equation}\label{phi-eq-sym-k}
        \on{NS}(X_{\ov{k}})^G\xrightarrow{\sim} \on{Hom}^{\mathrm{sym}}(X,\widehat{X}),
    \end{equation}
    so that every element $\alpha\in \on{NS}(X_{\ov{k}})^G$ defines a symmetric homomorphism $\varphi_\alpha\colon X\to\widehat{X}$.
    \item The pullback map $\on{NS}(X)[1/2]\to \on{NS}(X_{\ov{k}})^G[1/2]$ is an isomorphism.
\end{enumerate}
\end{lem}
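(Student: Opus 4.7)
The plan is to derive all three parts from the étaleness of $\mathbf{NS}_{X/k}$ together with the diagram (\ref{pic-k-points}), supplemented in (3) by a Poincaré-bundle construction.

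For (1), I would apply the diagram (\ref{pic-k-points}) with $k$ replaced by $k' \in \{k_s, \ov{k}\}$. In both cases $X(k')$ is nonempty, so the two left vertical maps are isomorphisms, and the snake lemma embeds $\on{NS}(X_{k'})$ into $\mathbf{NS}_{X_{k'}/k'}(k') = \mathbf{NS}_{X/k}(k')$ with cokernel inside $H^1(k', \widehat{X}_{k'})$. This $H^1$ vanishes over $\ov{k}$ (trivial Galois group), and also over $k_s$ since $\ov{k}/k_s$ is purely inseparable so $\on{Aut}(\ov{k}/k_s) = 1$. Combined with the étaleness of $\mathbf{NS}_{X/k}$, which gives $\mathbf{NS}_{X/k}(k_s) = \mathbf{NS}_{X/k}(\ov{k})$, this yields (1). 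For (2), étaleness also gives $\mathbf{NS}_{X/k}(k) = \mathbf{NS}_{X/k}(k_s)^G$, and by (1) this equals $\on{NS}(X_{\ov{k}})^G$; the ``in particular'' assertion then comes from taking $G$-invariants of (\ref{phi-eq-sym}).

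For (3), the same snake-lemma argument applied over $k$, combined with (2), produces an injection $\on{NS}(X) \hookrightarrow \on{NS}(X_{\ov{k}})^G$ with cokernel inside $H^1(k, \widehat{X})$, and it remains to show that this cokernel is annihilated by $2$. Given $\alpha \in \on{NS}(X_{\ov{k}})^G$, the isomorphism (\ref{phi-eq-sym-k}) produces a symmetric $k$-homomorphism $\varphi_\alpha \colon X \to \widehat{X}$, and I would take the line bundle $L \coloneqq (\on{id}_X, \varphi_\alpha)^* \mathcal{P}$ on $X$. A standard computation with the Poincaré bundle gives the identity $\varphi_{(\on{id}_X, f)^* \mathcal{P}} = f + \widehat{f} \circ \kappa_X$ for any homomorphism $f \colon X \to \widehat{X}$; specializing to $f = \varphi_\alpha$ and invoking the symmetry condition $\widehat{\varphi_\alpha} \circ \kappa_X = \varphi_\alpha$ yields $\varphi_L = 2\varphi_\alpha$. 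Under (\ref{phi-eq-sym-k}) this means $[L_{\ov{k}}] = 2\alpha$ in $\on{NS}(X_{\ov{k}})^G$, so the image of $[L] \in \on{NS}(X)$ in $\on{NS}(X_{\ov{k}})^G$ is $2\alpha$, and the cokernel is $2$-torsion.

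The main obstacle is the formula $\varphi_{(\on{id}_X, f)^* \mathcal{P}} = f + \widehat{f} \circ \kappa_X$ used in (3). It is classical---one can extract it from Mumford's book, using that $\varphi_{\mathcal{P}}$ is (up to $\kappa_X$) the swap isomorphism $X \times \widehat{X} \to \widehat{X} \times X$ together with the functoriality $\varphi_{g^* M} = \widehat{g} \circ \varphi_M \circ g$---but a careful reference is needed to ensure the construction descends from $\ov{k}$ to $k$ with $L$ genuinely defined over $k$, which is exactly what is needed for the $2$-torsion conclusion to yield the descent statement (3).
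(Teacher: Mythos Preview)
Your proposal is correct and follows essentially the same route as the paper. Parts (1) and (2) are argued identically (vanishing of $H^1$ over $k_s$ and $\ov{k}$, plus \'etaleness of $\mathbf{NS}_{X/k}$ and Galois descent). For (3), you and the paper use the \emph{same} line bundle $L=(\id_X,\varphi_\alpha)^*\mathcal{P}$; the only difference is in how you verify $[L_{\ov{k}}]=2\alpha$. The paper computes directly in $\on{NS}$: it uses the identity $(\id\times\varphi_\alpha)^*c_1(\mathcal{P})=m^*\alpha-\on{pr}_1^*\alpha-\on{pr}_2^*\alpha$ and then applies $\delta^*$ to get $(2_X)^*\alpha-2\alpha=2\alpha$. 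You instead pass through the injection $\on{NS}\hookrightarrow \Hom^{\mathrm{sym}}(X,\widehat{X})$ and check $\varphi_L=2\varphi_\alpha$ via the formula $\varphi_{(\id,f)^*\mathcal{P}}=f+\widehat{f}\circ\kappa_X$. Both computations are standard and equivalent; the paper's is slightly more self-contained since it avoids invoking the explicit form of $\varphi_{\mathcal{P}}$.

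Your stated ``main obstacle'' is not actually one: the line bundle $L=(\id_X,\varphi_\alpha)^*\mathcal{P}$ is visibly defined over $k$, since $\varphi_\alpha$ is a $k$-morphism by part (2) and $\mathcal{P}$ is a line bundle on $X\times\widehat{X}$ over $k$. No descent argument is needed.
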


\begin{proof}
(1) Consider the commutative diagram
\[
\begin{tikzcd}
    \on{NS}(X_{k_s}) \arrow[r,"\sim"] \arrow[d]  & \mathbf{NS}_{X/k}(k_s) \arrow[d,"\wr"] & \\
    \on{NS}(X_{\ov{k}}) \arrow[r,"\sim"] & \mathbf{NS}_{X/k}(\ov{k}).
\end{tikzcd}
\]
Here, the left horizontal maps are the injections appearing in \eqref{pic-k-points} over the fields $k_s$ and $\ov{k}$; they are isomorphisms because the cohomology groups $H^1(k_s,\mathbf{Pic}_{X/k}^0)$ and $ H^1(\ov{k},\mathbf{Pic}_{X/k}^0)$ are trivial (for the first group, this follows from the smoothness of $\mathbf{Pic}_{X/k}^0$). This implies (1).

(2) For every $k$-scheme $Y$, by Galois descent the pullback map $Y(k)\to Y(k_s)^G$ is bijective. Thus the pullback map $\mathbf{NS}_{X/k}(k)\to \mathbf{NS}_{X/k}(k_s)^G$ is bijective. The first assertion is now a consequence of (1), and the second assertion follows from taking $k$-points in (\ref{phi-eq-sym}). 

We now prove (3). By (1), it is enough to show that the pullback map 
\begin{equation}\label{eq-ns-reduced}\on{NS}(X)[1/2]\to \on{NS}(X_{k_s})^G[1/2]\end{equation} is an isomorphism. The pullback $\on{NS}(X)[1/2]\to \on{NS}(X_{k_s})^G[1/2]$ is injective (cf. (\ref{pic-k-points})), and hence so is (\ref{eq-ns-reduced}). For the surjectivity of (\ref{eq-ns-reduced}), let $\alpha \in \on{NS}(X_{\ov{k}})^G$, and let $\varphi_\alpha\in \Hom^{\mathrm{sym}}(X,\widehat{X})$ be the image of $\alpha$ under (\ref{phi-eq-sym-k}). Letting $\delta\colon X\rightarrow X\times X$ be the diagonal map, we have
\[(\id\times \varphi_{\alpha})^*c_1(\mathcal{P})=m^*\alpha-\on{pr}_1^*\alpha-\on{pr}_2^*\alpha\quad \text{in $\on{NS}((X\times X)_{\ov{k}})$},\]
and hence
\[\delta^*(\id\times \varphi_{\alpha})^*c_1(\mathcal{P})=2\alpha\quad \text{in $\on{NS}(X_{\ov{k}})$}.\]
Thus $2\alpha$ comes from $\on{NS}(X)$. Thus (\ref{eq-ns-reduced}) is surjective, as desired.
\end{proof}

By definition, a \emph{polarization} on $X$ is an ample class in $\on{NS}(X_{\ov{k}})^G$. A polarization $\theta\in \on{NS}(X_{\ov{k}})^G$ is called \emph{principal} if the symmetric homomorphism $\varphi_\theta$ of (\ref{phi-eq-sym-k}) is an isomorphism. 

\begin{ex}\label{theta-divisor}
The following example of a N\'eron--Severi class will be especially important in the rest of the paper. Let $k$ be a field, let $C$ be a smooth projective geometrically integral curve over $k$, and suppose that $C$ admits a zero-cycle $\alpha$ of degree $1$. For every integer $d\geq 1$, we let $\Sym^d (C)$ be the $d$-symmetric power of $C$. We have a morphism 
\[\Sym^d (C)\rightarrow J(C),\qquad \beta\mapsto \beta-d \alpha, \] and we define \[W_d(C)\coloneqq \Image(\Sym^d (C)\rightarrow J(C)).\] The class of $W_{g-1}(C)$ in $\on{NS}(J(C)_{\ov{k}})^G$ is the canonical principal polarization $\theta_C$ of $J(C)$. More generally, for all $i\geq0$, the class of $W_i(C)$ in $(CH^{g-i}(X_{\ov{k}})/\on{num})_\Q$ is equal to $\theta_C^{g-i}/(g-i)!$; see \cite[Appendix]{matsusaka1959characterization} and also see \cite[Section 2]{mattuck1962symmetric}. 
\end{ex}


\subsection{Fourier transform on crystalline cohomology 
}\label{section:fourier-crystalline}

\begin{lem}\label{lem:p-adic-abelvar}
Let $k$ be a field of characteristic $p>0$, let $X$ be an abelian variety over $k$ of dimension $g$ with Poincar\'e bundle $\mc{P}$.
\begin{enumerate}
\item The natural $W(k_p)$-algebra map $\wedge^*H^1(X_{k_p}/W(k_p))\to H^*(X_{k_p}/W(k_p))$ is an isomorphism. Thus, for all $i\geq 0$ the $W(k_p)$-module $H^{2i}(X_{k_p}/W(k_p))$ is free of rank $\binom{2g}{i}$.
\item For every line bundle $L$ on $X$ and all $i\geq 0$, the class of $c_1(L)^i/i!$ in $H^{2i}(X_{k_p}/W(k_p))$ lies in $H^{2i}(X_{k_p}/W(k_p))^{F=p^i}$.
\item The homomorphism 
\[
\ch(\mathcal{P})_*\colon H^*(X_{k_p}/W(k_p))\rightarrow H^*(\widehat{X}_{k_p}/W(k_p))\] 
is an isomorphism and, for all $i,j\geq 0$, it restricts to an isomorphism
\begin{align*}
   (c_1(\mathcal{P})^{2g-i}/(2g-i)!)_*\colon& H^{i}(X_{k_p}/W(k_p))^{F=p^j}\xrightarrow{\sim} H^{2g-i}(\widehat{X}_{k_p}/W(k_p))^{F=p^{g-i+j}}.
\end{align*}
\end{enumerate}
\end{lem}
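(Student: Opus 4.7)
My plan divides the argument into the three parts of the lemma; throughout, I replace $k$ by its perfect closure $k_p$, since crystalline cohomology depends only on the perfect base.

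For (1), I will appeal to the standard exterior-algebra description of the crystalline cohomology of an abelian variety over a perfect field (as worked out in, e.g., Mazur--Messing or Berthelot--Breen--Messing). The argument is formally identical to its Betti and $\ell$-adic counterparts: the K\"unneth formula together with the identity $m^*=p_1^*+p_2^*$ on $H^1$ for the multiplication $m\colon X\times X\to X$ forces $n_X^*$ to act as multiplication by $n^i$ on the image of the natural algebra map $\wedge^*H^1(X/W(k_p))\to H^*(X/W(k_p))$; an eigenvalue or rank comparison (matching the Hodge numbers of any characteristic-zero lift after inverting $p$) then upgrades this to an isomorphism of graded $W(k_p)$-algebras. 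The rank statement follows since $H^1(X/W(k_p))$ is free of rank $2g$.

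For (2), I will combine (1) with two inputs. To obtain the integrality of $c_1(L)^i/i!$, I identify $c_1(L)$ with a $2$-form $\omega\in\wedge^2 H^1(X/W(k_p))$ via (1) and invoke the purely formal fact that for any free module $V$ over a commutative ring and any $\omega\in\wedge^2V$, the divided power $\omega^i/i!$ already lies in $\wedge^{2i}V$: expanded in a basis, each coefficient of $\omega^i$ collects the $i!$ orderings of a fixed unordered partition of a $2i$-subset into $i$ pairs, and is therefore divisible by $i!$. For the Frobenius condition, I will use that the absolute Frobenius $F_X$ of $X$ satisfies $F_X^*L\cong L^{\otimes p}$ (local transition functions are raised to the $p$-th power), so $F^*(c_1(L))=p\cdot c_1(L)$ in $H^2(X/W(k_p))$; multiplicativity of $F^*$ then yields $F^*(c_1(L)^i/i!)=p^i\cdot c_1(L)^i/i!$.

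For (3), I will realize $\ch(\mathcal{P})_*$ as the crystalline Fourier--Mukai transform, mirroring its Betti and $\ell$-adic analogues. Applying (1) and the K\"unneth formula to $X\times\widehat X$, the class $c_1(\mathcal{P})$ corresponds to the canonical perfect duality pairing between $H^1(X/W(k_p))$ and $H^1(\widehat X/W(k_p))$, and an explicit exterior-algebra computation then shows that $\ch(\mathcal{P})_*$ is a $W(k_p)$-linear bijection. A degree count identifies its restriction to $H^i(X/W(k_p))$ with the component induced by $c_1(\mathcal{P})^{2g-i}/(2g-i)!$, the unique term whose total degree, after multiplication by $p_1^*$ and pushforward along $p_2$, lands in $H^{2g-i}(\widehat X/W(k_p))$. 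The Frobenius weight shift $p^j\mapsto p^{g-i+j}$ follows by combining (2) applied to $\mathcal{P}$, which gives weight $p^{2g-i}$ on $c_1(\mathcal{P})^{2g-i}/(2g-i)!$, with the $-g$ weight shift contributed by $p_{2*}$. The main obstacle in (3) will be to check the isomorphism claim integrally rather than just after inverting $p$, i.e. that the pairing on top cohomology arising from $c_1(\mathcal{P})^{2g}/(2g)!$ has determinant a unit in $W(k_p)$; I would handle this by lifting $X$ to characteristic zero via Serre--Tate and invoking the classical Fourier--Mukai isomorphism, together with the crystalline--de Rham comparison.
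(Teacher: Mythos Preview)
Your treatments of (1) and (2) match the paper's essentially exactly: the paper cites \cite[Corollaire 2.5.5]{berthelot1982theorie} for (1), and for (2) carries out precisely the divided-power computation on $\wedge^2$ that you describe, with the $F=p^i$ assertion following from the standard fact (which you make explicit) that the crystalline $c_1$ lands in the $F=p$ eigenspace.

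For (3), your outline and the paper's proof agree up to and including the explicit Beauville exterior-algebra computation. The divergence is in how the integrality is secured. The paper's key input is \cite[Th\'eor\`eme 5.1.8]{berthelot1982theorie}, which says directly that $c_1(\mc{P})$ realizes an isomorphism of Dieudonn\'e crystals $H^1(\widehat{X}_{k_p}/W(k_p))\cong H^1(X_{k_p}/W(k_p))^\vee$; in particular the pairing on $H^1$ is already perfect over $W(k_p)$. Once this is in hand, choosing a $W(k_p)$-basis $e_1,\dots,e_{2g}$ of $H^1(X_{k_p}/W(k_p))$ and the dual basis $e_1^*,\dots,e_{2g}^*$ of $H^1(\widehat{X}_{k_p}/W(k_p))$, the Beauville computation shows $\ch(\mc{P})_*\,e_I=\pm\,e_{I^c}^*$ for every subset $I\subset\{1,\dots,2g\}$, so $\ch(\mc{P})_*$ sends a basis to (signs of) a basis and is an integral isomorphism on the nose. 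No lifting to characteristic zero is needed.

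Your proposal is internally inconsistent on this point: you invoke the ``canonical perfect duality pairing'' on $H^1$, which is exactly the statement above, and then you worry about integrality as a ``main obstacle'' to be resolved by Serre--Tate lifting. If the pairing on $H^1$ is perfect over $W(k_p)$, the obstacle evaporates; if you are unsure of that perfectness, \emph{that} is the statement to source from the literature, and it is in Berthelot--Breen--Messing. Your proposed workaround also has a terminological issue: Serre--Tate lifting in the strict sense applies only to ordinary abelian varieties, whereas here $X$ is arbitrary; you would need the general liftability of abelian varieties (Mumford, Norman--Oort) and a compatibility check for the Poincar\'e bundle and the crystalline/de Rham comparison, which is doable but considerably heavier than the one-line citation the paper uses.
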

\begin{proof}
(1) The first statement and the fact that $H^1(X_{k_p}/W(k_p))$ is free of rank $2g$ are proved in \cite[Corollaire 2.5.5]{berthelot1982theorie}. The second statement now follows from the properties of the exterior power.
 
(2) Let $L$ be a line bundle on $X$. By (1), letting $e_1,\cdots, e_{2g}$ be a $W(k_p)$-basis  of $H^1(X_{k_p}/W(k_p))$, the class of $c_1(L)$ in $H^2(X_{k_p}/W(k_p))$ may be expressed as \[\sum_{1\leq n<m\leq 2g}c_{nm}\cdot e_n\wedge e_m\] for some $c_{nm}\in W(k_p)$.
Then, for all $i\geq 0$, the class of $c_1(L)^i$ in $H^{2i}(X_{k_p}/W(k_p))$ may be expressed as \[\sum_{1\leq n_1<\dots <n_{2i}\leq 2g}d_{n_1\dots n_{2i}}\cdot e_{n_1}\wedge\dots \wedge e_{n_{2i}}\] for some $d_{n_1\dots n_{2i}}\in W(k_p)$ divisible by $i!$. It follows that the class of $c_1(L)/i!$ belongs to $H^{2i}(X_{k_p}/W(k_p))^{F=p^i}$.

(3) By the K\"unneth decomposition for crystalline cohomology \cite[Theorem 5.1.2]{illusie1983finiteness}, we get
\[
H^{*}((X\times \widehat{X})_{k_p}/W(k_p))=H^*(X_{k_p}/W(k_p))\otimes H^*(\widehat{X}_{k_p}/W(k_p)).
\]
By \cite[Remarque II.3.11.2]{illusie1979complexe},
the $W(k_p)$-module $H^1(X_{k_p}/W(k_p))$ admits a natural structure of Dieudonn\'e crystal, i.e., an $F$-crystal with Verschiebung operator $V$.
By \cite[Th\'eor\`eme 5.1.8]{berthelot1982theorie}, the class of $c_1(\mathcal{P})$ in 
\[(H^1(X_{k_p}/W(k_p))\otimes H^1(\widehat{X}_{k_p}/W(k_p)))^{F=p}\subset  H^{2}((X\times \widehat{X})_{k_p}/W(k_p))^{F=p}
\]yields
an isomorphism of Dieudonn\'e crystals
\[H^1(\widehat{X}_{k_p}/W(k_p))\cong H^1(X_{k_p}/W(k_p))^{\vee},\] 
which induces an isomorphism of $\Z_p$-modules
\[
\resizebox{1\hsize}{!}{
$(H^1(X_{k_p}/W(k_p))\otimes H^1(\widehat{X}_{k_p}/W(k_p)))^{F=p}\cong\Hom_{\on{F-Crys}
(k_p)}(H^1(X_{k_p}/W(k_p)),H^1(X_{k_p}/W(k_p)))$.
}
\]
The rest of the proof follows from an argument due to Beauville (see the proof of \cite[Proposition 1]{beauville1983fourier}).
More precisely, choose a $W(k_p)$-basis $e_1,\dots, e_{2g}$ of $H^1(X_{k_p}/W(k_p))$. The class of $c_1(\mathcal{P})$ in $H^2((X\times \widehat{X})_{k_p}/W(k_p))^{F=p}$ may be expressed as $\sum_n e_n\otimes e_n^*$.
For every subset $I=\{n_{1}<\dots< n_{|I|}\}$ of $\{1,\dots,2g\}$, we set 
\[e_I:=e_{n_1}\wedge\dots\wedge e_{n_{|I|}},\qquad c(I)\coloneqq |I|(|I|-1)/2,\qquad d(I)\coloneqq g+|I|(|I|+1)/2,\]
and we define $\epsilon(I)\in\left\{\pm1\right\}$ so that $e_I\wedge e_{I^c}=\epsilon(I)e_1\wedge\dots\wedge e_{2g}$. A direct computation shows that
\[
\exp\left(\sum_n e_n\otimes e_n^*\right) =\prod_n \exp(e_n\otimes e_n^*)=\prod_n (1+e_n\otimes e_n^*)=\sum_{I\subset \left\{1,\dots,2g\right\}}(-1)^{c(I)}e_I\otimes e_I^*.
\]
Then $\ch(\mathcal{P})_*e_I = (-1)^{d(I)}\epsilon(I) e_{I^c}^*$ for all $I\subset \left\{1,\cdots, 2g\right\}$. This in turn shows that for all $i\geq 0$,
\[
\ch(\mathcal{P})_*=(c_1(\mathcal{P})^{2g-i}/(2g-i)!))_*\colon H^{i}(X_{k_p}/W(k_p))\xrightarrow{\sim}H^{2g-i}(\widehat{X}_{k_p}/W(k_p)).
\]
Restricting to suitable eigenspaces for $F$ proves (3). 
\end{proof}

\subsection{Integral crystalline Tate conjecture for divisors
}\label{section:integral-crystalline-Tate}

\begin{thm}[Tate, Zarhin, Faltings]\label{thm:tate-zarhin-faltings-dejong}
Let $k$ be a finitely generated field of characteristic $p\geq 0$, and let $X$ be an abelian variety over $k$. For every prime number $\ell\neq p$, the direct limit of the $\ell$-adic cycle maps (\ref{ell-adic-cycle-map}) (for $i=1$) over all finite separable extensions $k'/k$
\[
CH^1(X_{k_s})\rightarrow H^2(X_{\ov{k}},\Z_\ell(1))^{(1)},
\]
where $H^2(X_{\ov{k}},\Z_\ell(1))^{(1)}$ is the $\Z_\ell$-submodule of $H^2(X_{\ov{k}},\Z_\ell(1))$ consisting of all elements with open $G$-stabilizer,
factors through an isomorphism 
\begin{equation}\label{eq:tate-zarhin-faltings-1}
NS(X_{k_s})_{\Z_\ell}\xrightarrow{\sim} H^2(X_{\ov{k}},\Z_\ell(1))^{(1)}.
\end{equation}

If $k=\F_q$ is finite, 
for every prime number $\ell\neq p$,
the $\ell$-adic cycle map (\ref{ell-adic-cycle-map}) (for $i=1$) factors through an isomorphism
\begin{equation}\label{eq:tate-zarhin-faltings-2}
NS(X)_{\Z_\ell}\xrightarrow{\sim} H^2(X_{\ov{\F}_q},\Z_\ell(1))^G,
\end{equation}
and for $\ell=p$, the crystalline cycle map (\ref{p-adic-cycle-map}) (for $i=1$) factors through an isomorphism
\begin{equation}\label{eq:tate-zarhin-faltings-3}
NS(X)_{\Z_p}\xrightarrow{\sim} H^2(X/W(\F_q))^{F=p}.
\end{equation}

\end{thm}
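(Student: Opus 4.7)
The plan is to reduce the theorem to Tate's theorem on endomorphisms \cite{tate1966endomorphisms} together with its extensions (Faltings in characteristic zero, Zarhin in positive characteristic, and the crystalline analogue of Waterhouse, Milne, and de Jong). The factoring of the cycle map through $NS$ is the classical fact that algebraically trivial line bundles have trivial cohomology class.

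By \Cref{lem:neron-severi-sep}(1), the isomorphism (\ref{phi-eq-sym-k}) applied over $k_s$ identifies $NS(X_{k_s})$ with $\Hom^{\mathrm{sym}}(X_{k_s}, \widehat{X}_{k_s})$; classes with open $G$-stabilizer on the left correspond to $\varinjlim_{k'/k}\Hom^{\mathrm{sym}}(X_{k'}, \widehat{X}_{k'})$ on the right (respectively, to $\Hom^{\mathrm{sym}}(X, \widehat{X})$ in the $G$-invariant case over $\F_q$). On the cohomological side, the K\"unneth decomposition yields $H^2(X_{\ov k}, \Z_\ell(1)) \cong \bigwedge^2 H^1(X_{\ov k}, \Z_\ell(1))$, and the Weil pairing $T_\ell X \otimes T_\ell \widehat{X} \to \Z_\ell(1)$ gives $H^1(X_{\ov k}, \Z_\ell(1)) \cong T_\ell \widehat{X}$. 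An argument parallel to the proof of \Cref{lem-sym} then identifies $H^2(X_{\ov k}, \Z_\ell(1))$ with the $\mathrm{sym}$-fixed subgroup of $\Hom_{\Z_\ell}(T_\ell X, T_\ell \widehat{X})$. Crucially, under this identification, the cycle class of a line bundle $L$ with class $\alpha \in NS(X_{\ov k})$ corresponds to $T_\ell \varphi_\alpha$; verifying this classical compatibility is the main technical point.

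Granted these identifications, both parts of the $\ell$-adic statement reduce to the Tate/Faltings/Zarhin theorem that the natural map $\Hom(X, \widehat{X})_{\Z_\ell} \to \Hom_{\Z_\ell}(T_\ell X, T_\ell \widehat{X})$ is an isomorphism onto the subgroup of elements with open $G$-stabilizer (respectively, onto the $G$-invariants when $k=\F_q$); restricting to symmetric parts yields the claim. For the crystalline case over $\F_q$, \Cref{lem:p-adic-abelvar}(1) and the identification $H^1(\widehat{X}/W(\F_q)) \cong H^1(X/W(\F_q))^{\vee}$ of $F$-crystals appearing in the proof of \Cref{lem:p-adic-abelvar}(3) realize $H^2(X/W(\F_q))^{F=p}$ as the symmetric part of $\Hom_{\on{F-Crys}(\F_q)}(H^1(\widehat{X}/W(\F_q)), H^1(X/W(\F_q)))$; the conclusion then follows from the $p$-adic analogue of Tate's theorem due to Waterhouse, Milne, and de Jong via Dieudonn\'e theory. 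The main obstacle throughout is setting up this dictionary between geometric (the symmetric homomorphisms $\varphi_L$), Tate-module or Dieudonn\'e, and cohomological descriptions precisely; once this is done, the deeper theorems are applied as black boxes.
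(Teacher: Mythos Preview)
Your proposal is correct and follows essentially the same route as the paper: reduce to the Tate--Zarhin--Faltings theorem (and its $p$-adic analogue) via the identification of $NS$ with $\Hom^{\mathrm{sym}}(X,\widehat{X})$ and of $H^2$ with the corresponding symmetric homomorphisms of Tate modules or Dieudonn\'e modules. The only notable differences are in bookkeeping: for the crystalline case the paper sets up the dictionary using the map $m^*-\on{pr}_1^*-\on{pr}_2^*\colon H^2\to H^1\otimes H^1$ rather than the exterior-algebra identification, and it cites \cite{daddezio2024boundedness} for the fact that the resulting map $H^2(X/W(\F_q))^{F=p}\to \Hom^{\mathrm{sym}}(X\{p\},\widehat{X}\{p\})$ is an isomorphism, together with \cite{zarhin2008homomorphisms} (attributing the result to Tate) for the isomorphism $\Hom(X,\widehat{X})_{\Z_p}\xrightarrow{\sim}\Hom(X\{p\},\widehat{X}\{p\})$ over finite fields, rather than invoking de~Jong.
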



\begin{proof}
Let $p$ be the characteristic of the base field.
For $\ell\neq p$, the result is a well-known consequence of theorems of Tate \cite{tate1966endomorphisms}, Zarhin \cite{zarhin1974isogenies,zarhin1974remark}, and Faltings \cite{faltings1983endlich,faltings1984erratum,faltings1986rational}.

Suppose now that $k=\F_q$ is finite.
The image of the map
\[ 
H^2(X/W(\F_q))\xrightarrow{m^*-\on{pr}_1^*-\on{pr}_2^*} H^{2}(X\times X/W(\F_q))
\]
is contained in $H^1(X/W(\F_q))\otimes H^1(X/W(\F_q))\subset H^{2}(X\times X/W(\F_q))$. We have the canonical identifications 
\begin{align*}
&(H^1(X/W(\F_q))\otimes H^1(X/W(\F_q)))^{F=p}\\
&\cong \Hom_{\on{F-Crys}
(\F_q)}(H^1(\widehat{X}/W(\F_q)),H^1(X/W(\F_q)))\\
&\cong \Hom(X\{p\},\widehat{X}\{p\}).
\end{align*}
Therefore we obtain a homomorphism
\begin{equation}\label{eq:daddezio}
H^2(X/W(\F_q))^{F=p}\rightarrow \Hom^{\mathrm{sym}}(X\{p\},\widehat{X}\{p\}).
\end{equation}
As explained in the proof of \cite[Proposition 4.6]{daddezio2024boundedness}, the map (\ref{eq:daddezio}) is an isomorphisms.
Consider now the commutative diagram
\begin{equation}\label{eq:tate-zarhin-faltings-dejong}
\begin{tikzcd}
\on{NS}(X)_{\Z_p} \arrow[r] \arrow[d, "\wr"] & H^2(X/W(\F_q))^{F=p} \arrow[d, "\wr"]\\
\Hom^{\mathrm{sym}}(X,\widehat{X})_{\Z_p}
\arrow[r]   &\Hom^{\mathrm{sym}}(X\{p\}, \widehat{X}\{p\}),
\end{tikzcd}
\end{equation}
where 
the left vertical map is the isomorphism (\ref{phi-eq-sym-k}) composed with the isomorphism $NS(X)\xrightarrow{\sim}NS(X_{\ov{\F}_q})^G$ due to Lang's theorem, the right vertical map is (\ref{eq:daddezio}), the top horizontal map is the cycle map appearing in the statement, and the bottom horizontal map is given by restriction to $p$-primary torsion subgroups. The commutativity follows from the definitions of the four maps and the isomorphisms
\[
(\id\times \varphi_L)^*\mathcal{P}\cong m^*L\otimes\on{pr}_1^*L^\vee\otimes\on{pr}_2^*L^\vee,
\]
which hold  for all line bundles $L$ on $X$. 

A theorem of Tate \cite[Theorem 7.1]{zarhin2008homomorphisms} shows that the natural map
\[
\Hom(X,\widehat{X})_{\Z_p}\rightarrow \Hom(X\{p\},\widehat{X}\{p\})
\]
is an isomorphism.
This implies that the bottom horizontal map in (\ref{eq:tate-zarhin-faltings-dejong}) is an isomorphism, and hence so is the top horizontal map, as desired.
\end{proof}

\section{Algebraicity of minimal classes and the direct summand property}\label{sec:algebraicity-minimal}

The goal of this section is to prove \Cref{prop:minimal-class-direct-summand}, which gives a sufficient condition for an abelian variety over a field $k$ to be a direct summand of a product of Jacobians of smooth projective curves $C_j$ such that $C_j(k)\neq\emptyset$. \Cref{prop:minimal-class-direct-summand} will be used in the proof of \Cref{thm:main1-precise}.

\begin{lem}\label{lem:geom-irr-rep-div}
Let $k$ be a field, let $X$ be a projective $k$-variety that is generically smooth and geometrically irreducible, let $d$ be the dimension of $X$, and let $D$ be a Cartier divisor on $X$.
\begin{enumerate}
    \item Assume that $k$ is infinite. There exist very ample effective Cartier divisors $H_1,H_2$ on $X$, which are generically smooth and geometrically irreducible, 
    such that 
     $[D]=[H_1]-[H_2]$ in $CH_{d-1}(X)$
     and $(H_i)_{\on{reg}}=X_{\on{reg}}\cap H_i$ for both $i=1,2$.
    \item Assume that $k$ is finite. There exist very ample effective Cartier divisors $H_1,H_2,H_3$ on $X$, which are generically smooth and geometrically irreducible, such that $[D]=[H_1]-[H_2]-[H_3]$ in $CH_{d-1}(X)$.
    \item  If $x\in X(k)$ is a smooth $k$-point, the $H_i$ in (1) and (2) may moreover be chosen to contain $x$ and to be smooth at $x$.
\end{enumerate}
\end{lem}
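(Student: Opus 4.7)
The plan is to write $D$ as a difference of very ample divisors whose members in the corresponding linear systems can be forced to be geometrically irreducible and generically smooth via Bertini-type theorems. Fix a very ample divisor $A$ on $X$; for $n \gg 0$, both $\O_X(D + nA)$ and $\O_X(nA)$ are very ample by Serre vanishing. For part (1), with $k$ infinite, I would choose $H_1 \in |D + nA|$ and $H_2 \in |nA|$ to be general members. Classical Bertini's smoothness theorem guarantees that a general member of a very ample linear system is smooth at every point of $X_{\on{reg}}$, yielding both generic smoothness and the inclusion $X_{\on{reg}} \cap H_i \subset (H_i)_{\on{reg}}$. Jouanolou's Bertini irreducibility theorem, valid over infinite fields for very ample systems on geometrically irreducible varieties of dimension $\geq 2$, then gives geometric irreducibility, and the relation $[D] = [H_1] - [H_2]$ in $CH_{d-1}(X)$ holds by construction. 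The reverse inclusion $(H_i)_{\on{reg}} \subset X_{\on{reg}} \cap H_i$ follows from a local computation at each $p \in X_{\on{sing}} \cap H_i$: writing $\O_{H_i, p} = \O_{X, p}/(f)$ for a general local equation $f$, a dimension count on the cotangent space shows that $f$ does not resolve the singularity of $\O_{X,p}$.

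For part (2), over a finite field, classical Bertini fails due to the scarcity of rational points on linear systems, so I would instead invoke Poonen's Bertini theorem over finite fields, which produces smooth members of any sufficiently ample linear system, together with a Bertini irreducibility theorem over finite fields (due to Charles and Poonen) which adds geometric irreducibility. To ensure that each linear system in play is sufficiently ample, I would decompose $[D] = [H_1] - [H_2] - [H_3]$ with $H_1 \in |D + 2nA|$ and $H_2, H_3 \in |nA|$; the three-divisor form provides enough flexibility to apply the Poonen-Charles machinery separately to each factor, once $n$ is taken large enough. For part (3), imposing $H_i \ni x$ is a codimension-one linear condition on the ambient linear system, and smoothness at $x$ is then an open condition on the resulting subsystem, attainable via Bertini in the infinite-field case and via Poonen's Bertini with prescribed local conditions at $x$ in the finite-field case.

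The main obstacle I expect is the finite-field case, where one must simultaneously satisfy sufficient ampleness for Poonen's smoothness theorem, for the Bertini irreducibility theorem over finite fields, and for the fixed-class constraint $[D] = [H_1] - [H_2] - [H_3]$; the three-divisor form is tailored precisely to provide the necessary room, and also explains why the sharper condition $(H_i)_{\on{reg}} = X_{\on{reg}} \cap H_i$ is not asserted in part (2). A secondary subtlety is verifying this equality in part (1), which requires showing that each singular point of $X$ lying on a general $H_i$ remains singular on $H_i$, via the local-ring computation sketched above.
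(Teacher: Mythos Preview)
Your treatment of parts (1) and (3) is correct and matches the paper. (For the reverse inclusion $(H_i)_{\on{reg}}\subset X_{\on{reg}}\cap H_i$, note that your local argument in fact works for \emph{every} effective Cartier divisor on an integral scheme, with no genericity required: if $\O_{X,p}$ is not regular and $f\in\mathfrak{m}_p$ is a non-zero-divisor, then the embedding dimension of $\O_{X,p}/(f)$ drops by at most one while the Krull dimension drops by exactly one, so $\O_{X,p}/(f)$ is again not regular.)

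There is, however, a genuine gap in part (2). Poonen's Bertini theorem and the Charles--Poonen irreducibility theorem are asymptotic in the degree: for a \emph{fixed} projective embedding via $A$, they guarantee good members in $|dA|$ once $d$ is sufficiently large, but say nothing about a single very ample linear system of the form $|D+2nA|$, which is not a high multiple of any fixed line bundle. Over a finite field there is no reason an individual very ample system, however positive, should contain a geometrically irreducible or generically smooth member. The paper repairs this with the decomposition
\[
[D]=[(m+1)(D+nH)]-[m(D+nH)]-[nH]:
\]
one first chooses $n$ large so that $D+nH$ is very ample and $|nH|$ contains a good $H_3$ (Poonen and Charles--Poonen applied with $\mathcal{O}(1)=H$), and then chooses $m$ large so that $|m(D+nH)|$ and $|(m+1)(D+nH)|$ contain good $H_2$ and $H_1$ (the same theorems applied with $\mathcal{O}(1)=D+nH$). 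Two independent asymptotic parameters are needed, one for each choice of $\mathcal{O}(1)$; your decomposition provides only one, and that is precisely why it does not go through.
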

\begin{proof}
We fix a very ample divisor $H$ on $X$.

Suppose first that $k$ is infinite. By \cite[Example 1.2.10]{lazarsfeld2004positivity}, there exists an integer $n\geq 1$ such that $D+nH$ is very ample. By \cite[Theorem II.8.18]{hartshorne}, \cite[Th\'eor\`eme 6.3(4)]{jouanolou1983bertini}, and \cite[Theorem 3.4]{mainak2023bertini}, there exist geometrically smooth geometrically irreducible members $H_1\in |D+nH|$ and $H_2\in |nH|$ such that $[D]=[H_1]-[H_2]\in CH_{d-1}(X)$ and $(H_i)_{\on{reg}}=X_{\on{reg}}\cap H_{i}$ for both $i=1,2$. If $x\in X(k)$ is a smooth $k$-point, the same argument shows that the $H_i$ may be chosen to contain $x$ and to be smooth at $x$. This proves (1), as well as (3) when $k$ is infinite.

Suppose now that $k$ is finite. By \cite[Example 1.2.10]{lazarsfeld2004positivity}, \cite[Theorem 1.1]{poonen2004bertini}, and \cite[Theorem 1.1]{charles2016bertini}, there exists an integer $n\geq 1$ such that
$D+nH$ is very ample and 
such that there exists a generically smooth geometrically irreducible member $H_3\in |nH|$. By 
\cite[Theorem 1.1]{poonen2004bertini} and \cite[Theorem 1.1]{charles2016bertini}, there exists an integer $m\geq 1$ such that there exist generically smooth geometrically irreducible members $H_2\in |m(D+nH)|$ and  $H_1\in |(m+1)(D+nH)|$, so that $[D]=[H_1]-[H_2]-[H_3]\in CH_{d-1}(X)$. If $x\in X(k)$ is a smooth $k$-point, we run the same argument, replacing \cite[Theorem 1.1]{poonen2004bertini} by \cite[Theorem 3.3]{poonen2004bertini} and \cite[Theorem 1.1]{charles2016bertini} by \cite[Theorem 1.8]{charles2016bertini}, to conclude that the $H_i$ may be chosen to contain $x$ and to be smooth at $x$. This proves (2), as well as (3) when $k$ is finite.
\end{proof}

Let $k$ be a field and let $i$ be a positive integer $i$. We say that {\it resolution of singularities in dimension $i$ is available over $k$} if for every $i$-dimensional integral $k$-variety $X$, there exist a regular integral $k$-variety $Y$ and a birational proper morphism $Y\rightarrow X$. 
Resolution of singularities in dimension $i\leq 3$ is available over any field $k$; we will only need to use resolution of singularities in dimension $i=1$, which follows from normalization.

\begin{prop}\label{prop:geom-irr-rep}
Let $k$ be a field, let $X$ be a smooth projective geometrically integral $k$-variety, and let $i\geq 1$ be an integer. 
\begin{enumerate}
    \item Every $i$-cycle $\alpha$ on $X$ is rationally equivalent to an $i$-cycle of the form $\sum_j m_j Z_j$, where $Z_j\subset X$ is a geometrically integral closed subvariety of $X$ for each $j$.
    \item Assume that resolution of singularities in dimension $i$ is available over $k$. Then the $Z_j$ may be chosen as in (1) and with a birational morphism $W_j\rightarrow Z_j$, where $W_j$ is a smooth projective geometrically integral $k$-variety.
    \item Under the assumptions of (2), assume further that $X(k)\neq\emptyset$. Then the $Z_j$ may be chosen as in (2) and such that $Z_j$ has a smooth $k$-point for every $j$.
\end{enumerate}
\end{prop}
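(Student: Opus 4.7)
The plan is to reduce all three statements to the divisor case handled by \Cref{lem:geom-irr-rep-div} via an iterative Bertini construction. By linearity we may assume $Z$ is a single integral closed subvariety of $X$ of dimension $i$, and we argue by induction on the codimension $c := \dim X - i$. The base case $c = 1$ is immediate: $Z$ is an integral Weil divisor on the smooth $X$, hence Cartier, so \Cref{lem:geom-irr-rep-div} directly furnishes $[Z] = [H_1] - [H_2]$ (or $[Z] = [H_1] - [H_2] - [H_3]$ over a finite field) with each $H_j$ generically smooth and geometrically irreducible; being an integral scheme whose smooth locus is dense, each $H_j$ is also geometrically reduced, hence geometrically integral.

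For $c > 1$, we construct a nested chain $X = Y_0 \supsetneq Y_1 \supsetneq \cdots \supsetneq Y_{c-1} \supseteq Z$ where each $Y_j$ is projective, generically smooth, geometrically integral, has codimension $1$ in $Y_{j-1}$, and is smooth at every point of $Z$. Given $Y_j$ in which $Z$ has codimension at least $2$, we take $Y_{j+1}$ to be a general member of the linear system $|\mathcal{L}^n \otimes \mathcal{I}_{Z/Y_j}|$ on $Y_j$, for a fixed very ample $\mathcal{L}$ on $X$ and $n \gg 0$. The required properties follow from the same Bertini theorems invoked in \Cref{lem:geom-irr-rep-div} (Jouanolou \cite{jouanolou1983bertini} over infinite fields; Poonen \cite{poonen2004bertini} and Charles \cite{charles2016bertini} over finite fields): the codimension-$\geq 2$ base locus secures geometric irreducibility of a general member, while the separation of tangent vectors along $Z$ for $n$ large secures smoothness of $Y_{j+1}$ at every point of $Z$. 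On the terminal $Y_{c-1}$, the subvariety $Z$ has codimension $1$ and lies in the smooth locus, so it is a Cartier divisor. A final application of \Cref{lem:geom-irr-rep-div} to $(Y_{c-1}, Z)$ yields $[Z] = \sum_j \pm [H_j]$ in $CH_i(Y_{c-1})$ with each $H_j$ geometrically integral; pushforward along $Y_{c-1} \hookrightarrow X$ produces the desired expression in $CH_i(X)$.

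For (2), after applying (1) we resolve each $Z_j$ via the availability hypothesis to obtain $W_j \to Z_j$ birational and proper with $W_j$ smooth and projective; since $Z_j$ is geometrically integral and the morphism is birational, $W_j \times_k \overline{k}$ is a smooth variety birational to the integral variety $Z_{j, \overline{k}}$, hence integral, so $W_j$ is geometrically integral. For (3), we run the iteration of (1) while additionally imposing at each step, via the smooth-point refinements of the cited Bertini theorems and \Cref{lem:geom-irr-rep-div}(3), that each $Y_j$ contain a fixed smooth $k$-point $x \in X(k)$ and be smooth at $x$; the concluding application of \Cref{lem:geom-irr-rep-div}(3) then produces $H_j$'s each containing $x$ and smooth there, and the resolutions $W_j \to Z_j$ may be chosen to be isomorphisms over that point. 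The main technical challenge lies in (1): arranging the iterative Bertini so that every $Y_j$ is simultaneously geometrically integral, generically smooth, contains $Z$, and is smooth at every point of $Z$ — this last condition being essential so that $Z$ is Cartier on $Y_{c-1}$ and \Cref{lem:geom-irr-rep-div} applies at the terminal stage. Over infinite fields this is a routine application of classical Bertini theorems, but over finite fields it rests on the more delicate theorems of Poonen and Charles.
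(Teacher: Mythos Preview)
Your inductive Bertini approach has a genuine gap in part (1). You claim that at each step a general member $Y_{j+1}$ of $|\mathcal{L}^n\otimes\mathcal{I}_{Z/Y_j}|$ can be taken smooth at \emph{every} point of $Z$; this is false in general. If $z\in Z$ is a point where the Zariski tangent space of $Z$ fills the ambient tangent space of $Y_j$ (equivalently $\mathcal{I}_{Z,z}\subset\mathfrak{m}_z^2$), then any hypersurface of $Y_j$ containing $Z$ is forced to be singular at $z$. A concrete obstruction already at the first step: take $X=\mathbb{P}^3$ and $Z$ the closure of the monomial curve $t\mapsto(t^3,t^4,t^5)$; at the origin $\mathcal{I}_{Z}\subset\mathfrak{m}^2$, so every surface through $Z$ is singular there, and no $Y_1$ as you require can exist. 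Bertini theorems guarantee smoothness of a general member only \emph{away} from the base locus; the ``separation of tangent vectors'' heuristic does not extend across singular points of $Z$.

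The paper circumvents this by blowing up $X$ along $Z$ and using the Segre class formula $[Z]=(-1)^{d-i-1}\pi_*([E]^{d-i})$ in $CH_i(X)$. On the blow-up $\widetilde{X}$ one then writes $[E]$ via \Cref{lem:geom-irr-rep-div} as a combination of very ample geometrically irreducible divisors and takes generic complete intersections of these; no smoothness along a fixed base locus is ever needed. A secondary issue: in your argument for (2) you assert that resolution yields $W_j$ \emph{smooth}, but the availability hypothesis only gives $W_j$ \emph{regular}, which over imperfect $k$ is strictly weaker. The paper handles this by an iterated blow-up argument that pushes the non-smooth locus of a regular model to higher and higher codimension, using the control on $(H_i)_{\mathrm{reg}}$ provided by \Cref{lem:geom-irr-rep-div}(1).
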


\begin{proof}
Let $d$ be the dimension of $X$. For each of (1) - (3), the conclusion is immediate when $i\geq d$. We may thus assume that $0<i<d$.

(1) It is enough to prove that every $i$-dimensional integral closed subvariety $Z$ of $X$ is rationally equivalent to an $i$-cycle $\sum_j m_j Z_j$ as in the statement.
Let $\pi\colon \widetilde{X}\rightarrow X$ be the blow-up of $X$ along $Z$ and let $E$ be the exceptional divisor of $\pi$.
By \cite[\S 4.3]{fulton1998intersection}, computing the $i$-dimensional component of the Segre class of $Z$ in $X$, we obtain \[[Z]=(-1)^{d-i-1}\pi_*([E]^{d-i})\quad \text{in $CH_i(X)$}.\]
The $k$-variety $\widetilde{X}$ is generically smooth because $X$ is smooth, and it is geometrically irreducible by \cite[Tag 0BFM]{stacks-project} and the geometric integrality of $X$. Since $E$ is a Cartier divisor on $\widetilde{X}$, 
\Cref{lem:geom-irr-rep-div} (1) (the first half) and (2) show that $[E]\in CH_{d-1}(\widetilde{X})$ is represented by a linear combination of generically smooth geometrically irreducible very ample effective Cartier divisors on $\widetilde{X}$.
Hence 
\[[E]^{d- i}=\sum_j \widetilde{m}_j [\widetilde{Z}_{j}]\quad \text{in $CH_i(X)$},\] where $\widetilde{Z}_j$ is a generically smooth geometrically irreducible complete intersection of very ample divisors on $\widetilde{X}$ for each $j$. For every subvariety $Y\subset \widetilde{X}$, a general member of a very ample divisor intersects properly with $Y$. Therefore, we may further assume that $\widetilde{Z}_j$ are distinct to each other and $\widetilde{Z}_j\not\subset E$, which implies that $\pi|_{\widetilde{Z_j}}$ are birational onto their distinct images. 
Now 
\[[Z]=(-1)^{d-i-1}\sum_j \widetilde{m}_j \pi_*[\widetilde{Z}_{j}]=(-1)^{d-i-1}\sum_j \widetilde{m}_j[Z_j]\quad \text{in $CH_i(X)$},\] 
where we define $Z_j\coloneqq \pi(\widetilde{Z}_j)\subset X$ with the reduced induced closed structure. Each $Z_j$ is reduced and, being birational to $\widetilde{Z}_j$, it is also geometrically irreducible, as well as geometrically reduced at its generic point. Now \cite[Tag 04KS (3)]{stacks-project} implies that $\widetilde{Z}_j$ is geometrically reduced, and hence geometrically integral.

(2) The statement is clear when $k$ is perfect. We may thus assume that $k$ is imperfect, and hence in particular infinite. Let $Z$ be an $i$-dimensional geometrically integral closed subvariety of $X$. When resolution of singularities in dimension $i$ is available over $k$, 
there exist a regular geometrically integral projective $k$-variety $W$ and a birational morphism $W\rightarrow Z$.
Embedding $W$ into $\P^N$ for $N$ large enough, the composite morphism $W\rightarrow Z\hookrightarrow X$ factors as $W\hookrightarrow{X\times \P^N}\rightarrow X$.
Hence, replacing $X$ by $X\times \P^N$ and $Z$ by $W$, we may assume that $Z$ is also regular.

For a $k$-variety $V$, we denote by $V_{\on{non-smooth}}\subset V$ the closed subset of points $v$ of $V$ such that the structure map $V\rightarrow \on{Spec}(k)$ is not smooth at $v$. Since $Z$ is geometrically integral, the codimension of $Z_{\on{non-smooth}}$ in $Z$ is at least $1$.

As in the proof of (1), let $\pi\colon \widetilde{X}\rightarrow X$ be the blow-up of $X$ along $Z$ and let $E$ be the exceptional divisor of $\pi$. Then $\widetilde{X}$ is regular, and it is smooth away from $(\pi|_E)^{-1}(Z_{\on{non-smooth}})$. 
Morever, since $Z\hookrightarrow X$ is a regular embedding, $\pi|_E\colon E\rightarrow Z$ is a projective bundle, which implies that the codimension of $(\pi|_E)^{-1}(Z_{\on{non-smooth}})$ in $E$ is at least $1$, so that the codimension of $(\pi|_E)^{-1}(Z_{\on{non-smooth}})$ in $\widetilde{X}$ is at least $2$. The argument of (1), combined with \Cref{lem:geom-irr-rep-div} (1) (the part about regular loci), then yields complete intersections $\widetilde{Z}_j\subset \widetilde{X}$ such that 
\begin{itemize}
    \item[(i)] $\widetilde{Z}_j$ is regular and geometrically integral, and
    \item[(ii)] the codimension of $(\widetilde{Z}_j)_{\on{non-smooth}}$ in $\widetilde{Z}_j$ is at least $2$ for each $j$.
\end{itemize}
  In other words,  we may further assume that there exists $W\rightarrow Z$ such that the codimension of $W_{\on{non-smooth}}$ in $W$ is at least $2$, and after replacing $X$ by $X\times \P^N$ and $Z$ by W, that the codimension of $Z_{\on{non-smooth}}$ in $Z$ is at least $2$. 
We repeat the above procedure to achieve $Z$ that is smooth.

(3) Assume further that $X$ has a smooth $k$-point $x$. By the moving lemma \cite[Tag 0B0D]{stacks-project}, it is enough to consider an $i$-dimensional integral closed subvariety $Z\subset X$ not containing $x$. We may repeat the same argument as in (2), except that this time we apply \Cref{lem:geom-irr-rep-div} (3) to $E$ and to a lift of $x$ in $\widetilde{X}$.
\end{proof}

\begin{rem}
    Each of the conclusions of \Cref{prop:geom-irr-rep} is false for $i=0$: a smooth projective geometrically integral $k$-variety might not have any $k$-point.
\end{rem}

\begin{prop}\label{prop:minimal-class-direct-summand}
Let $k$ be a field, let $g\geq 0$ be an integer, and let $X$ be an abelian variety of dimension $g$ over $k$.
Let $S$ be a set of prime numbers and $R\coloneqq \cap_{\ell\in S}\Z_{(\ell)}$.
Assume that there exists $\theta\in \on{NS}(X_{\ov{k}})^G_R$ such that
\begin{itemize}
    \item[(i)] $\deg(\theta^g/g!)$ is prime to $S$, and
    \item[(ii)] the class of $\theta^{g-1}/(g-1)!$ in $(CH^{g-1}(X_{\ov{k}})/\mathrm{num})_{\Q}$ belongs to the image of $CH^{g-1}(X)_R\rightarrow (CH^{g-1}(X_{\ov{k}})/\on{num})_\Q$.
\end{itemize}
Then there exist smooth projective geometrically integral $k$-curves $C_1,\dots,C_n$ such that $C_j(k)\neq\emptyset$ for all $1\leq j\leq n$ and a non-zero integer $N$ that is not divisible by any prime in $S$ such that the multiplication-by-$N$ map $N_X\colon X\rightarrow X$ factors through the product $\prod_j J(C_j)$.
\end{prop}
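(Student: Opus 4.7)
The strategy follows the argument of \cite[Proposition 2.1]{voisin2022cycle} in the complex setting, using \Cref{prop:geom-irr-rep} to ensure that the curves produced are defined over $k$, geometrically integral, and possess a $k$-point.

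\textbf{Reduction.} After multiplying $\theta$ by a positive integer coprime to $S$ that clears its denominators, one may assume $\theta\in \on{NS}(X_{\ov{k}})^G$. By \Cref{lem:neron-severi-sep}(2), the symmetric homomorphism $\varphi_\theta\colon X\to \widehat X$ is then defined over $k$, and by hypothesis (i) it is an isogeny of degree $\deg(\theta^g/g!)^2$, prime to $S$. Similarly, hypothesis (ii) yields an integer $M$ prime to $S$ and a cycle $\beta\in CH^{g-1}(X)=CH_1(X)$ whose pullback to $X_{\ov{k}}$ represents $M\cdot\theta^{g-1}/(g-1)!$ modulo numerical equivalence.

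\textbf{Curve representatives.} Since $X(k)\neq\emptyset$ and resolution of singularities in dimension $1$ is given by normalization, \Cref{prop:geom-irr-rep}(3) provides a presentation $\beta=\sum_j n_j[Z_j]$ in $CH_1(X)$ in which each $Z_j\subset X$ is a geometrically integral curve containing a smooth $k$-point. Let $\nu_j\colon C_j\to Z_j$ be the normalization; then $C_j$ is a smooth projective geometrically integral $k$-curve, and since $\nu_j$ is an isomorphism over the smooth locus of $Z_j$, one also obtains a $k$-point $p_j\in C_j(k)$. Write $f_j\colon C_j\xrightarrow{\nu_j}Z_j\hookrightarrow X$, so that $f_{j*}[C_j]=[Z_j]$ in $CH_1(X)$, let $\iota_j\colon C_j\to J(C_j)$ be the Abel--Jacobi map with base point $p_j$, and let $\alpha_j\colon J(C_j)\to X$ be the unique $k$-homomorphism with $\alpha_j\circ\iota_j=f_j-f_j(p_j)$.

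\textbf{Key computation.} Writing $\varphi_{\theta_{C_j}}\colon J(C_j)\xrightarrow{\sim}\widehat{J(C_j)}$ for the canonical principal polarization from \Cref{theta-divisor}, define the two $k$-homomorphisms
\[
\Phi\coloneqq \sum_j n_j \alpha_j\colon \prod_j J(C_j)\to X,\qquad \Psi\coloneqq\bigl(\varphi_{\theta_{C_j}}^{-1}\circ\widehat{\alpha_j}\circ\varphi_\theta\bigr)_j\colon X\to \prod_j J(C_j).
\]
The plan is to prove that $\Phi\circ\Psi=M\cdot\id_X$, which via the injection $\on{End}(X)\hookrightarrow\on{End}(X_{\ov{k}})$ reduces to checking the identity over $\ov{k}$. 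Over $\ov{k}$, this follows from three standard facts about the assignment
\[
\sum_j n_j f_{j*}[C_j] \longmapsto \sum_j n_j\,\alpha_j\circ\varphi_{\theta_{C_j}}^{-1}\circ\widehat{\alpha_j}\circ\varphi_\theta \in \on{End}(X_{\ov{k}})_\Q :
\]
it is linear, it depends only on the numerical class of its input, and it sends the minimal class $\theta^{g-1}/(g-1)!$ to $\id_{X_{\ov{k}}}$. The last point follows by specialization to the case where $X_{\ov k}$ is itself a Jacobian and the input cycle is the class of its canonical curve embedded via Abel--Jacobi, combined with the Fourier transform description of the minimal class on $CH_*(X_{\ov{k}})_\Q$ (cf.\ \cite{beauville1983fourier}). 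Applied with input $\beta_{\ov k}$, whose numerical class is $M\cdot \theta^{g-1}/(g-1)!$, one concludes $(\Phi\circ\Psi)_{\ov k}=M\cdot \id_{X_{\ov k}}$, and hence $\Phi\circ\Psi=M\cdot \id_X$. Setting $N\coloneqq M$ exhibits $N_X=\Phi\circ\Psi$ as the desired factorization through $\prod_j J(C_j)$.

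\textbf{Main obstacle.} The heart of the argument is the identity $(\Phi\circ\Psi)_{\ov k}=M\cdot\id_{X_{\ov k}}$, which encapsulates the Fourier--Mukai characterization of the minimal class of a principally polarized abelian variety. Over $\C$ this is \cite[Proposition 2.1]{voisin2022cycle}; in positive characteristic the same computation goes through at the level of Chow groups modulo numerical equivalence, but one must verify carefully that all the ingredients (Fourier transform, duality between $J(C)$ and $\widehat{J(C)}$, and the universal property of the Albanese) remain valid over an arbitrary algebraically closed field.
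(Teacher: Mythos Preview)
Your overall strategy matches the paper's: represent a suitable multiple of the minimal class by geometrically integral curves with $k$-points via \Cref{prop:geom-irr-rep}(3), then show that the evident composite $X\to\prod_jJ(C_j)\to X$ is multiplication by an integer prime to $S$. Two points need attention.

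First, a minor correction: the identity $\Phi\circ\Psi=M\cdot\id_X$ is off by a factor. When $\theta$ is not principal, the assignment you describe sends $\theta^{g-1}/(g-1)!$ to $m\cdot\id_X$ with $m=\deg(\theta^g/g!)$, not to $\id_X$; so $\Phi\circ\Psi=(Mm)_X$. This does not harm the conclusion since $m$ is prime to $S$ by hypothesis (i), but it does show that your ``specialization to the case where $X_{\ov{k}}$ is itself a Jacobian'' cannot be a valid argument: the answer for a general $(X,\theta)$ genuinely differs from the Jacobian case, and there is no deformation or specialization connecting an arbitrary abelian variety to a Jacobian.

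Second, and more substantively, the two properties you invoke---that the assignment $\sum_j n_j f_{j*}[C_j]\mapsto \sum_j n_j\,\alpha_j\circ\varphi_{\theta_{C_j}}^{-1}\circ\widehat{\alpha_j}\circ\varphi_\theta$ depends only on the numerical class of the cycle, and that its value on the minimal class is (a multiple of) $\id_X$---are exactly the content of Matsusaka's endomorphism construction. The paper carries this out via the functor $\delta(-,-)$ of \cite[\S 4]{achter2023prym}: one has $\delta(\theta^{g-1}/(g-1)!,\theta)=-m\cdot\id_X$ by \cite[Proposition~4.3(c)]{achter2023prym}, and $\delta((f_j)_*[C_j],\theta)=-\alpha_j\varphi_{\theta_{C_j}}^{-1}\widehat{\alpha_j}\varphi_\theta$ by \cite[Proposition~4.4]{achter2023prym}, with numerical invariance supplied by \cite[Lemma~3.2]{achter2023prym}. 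These references work over an arbitrary base field, so they replace both your specialization sketch and the appeal to \cite{beauville1983fourier}. With this in place (and $N\coloneqq Mm$), your argument is complete and coincides with the paper's.
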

\begin{proof}
Without loss of generality, we may assume that $\theta\in \on{NS}(X_{\ov{k}})^G$.
We let $\alpha \in CH^{g-1}(X)_R$ be a lift of $\theta^{g-1}/(g-1)!\in (CH^{g-1}(X_{\ov{k}})/\mathrm{num})_\Q$.
There exists a non-zero integer $n$ that is prime to $S$ such that $n\cdot \alpha \in CH^{g-1}(X)$.
By \Cref{prop:geom-irr-rep}(3) applied to $i=1$, there exist a positive integer $n$ and, for all $1\leq j\leq n$, a smooth projective geometrically integral curve $C_j$ of genus $g_j$ such that $C_j(k)\neq\emptyset$, a morphism $f_j\colon C_j\rightarrow X$, and a coefficient $\epsilon_j \in \left\{\pm 1\right\}$, such that 
\[n\cdot \alpha = \sum_{j} \epsilon_j(f_j)_*[C_j]\quad \text{in $CH^{g-1}(X)$}.
\]
Let $Y\coloneqq \prod_j J(C_j)$
and $\theta_{\epsilon}\coloneqq  \sum_j \epsilon_j \on{pr}_j^*\theta_{C_j}\in \on{NS}(Y_{\ov{k}})^G$, where $\theta_{C_j}\in \on{NS}(J(C_j)_{\ov{k}})^G$ are the theta divisors; see \Cref{theta-divisor}. Let $\overline{f}_j\colon J(C_j)\rightarrow X$ be the morphisms of abelian varieties induced by $f_j$ and let $\overline{f}\coloneqq \sum_j \overline{f}_j\colon Y\rightarrow X$.
Consider the composite
\begin{equation}\label{eq:composition}
X\xrightarrow{\varphi_\theta} 
\widehat{X}\xrightarrow{\widehat{\overline{f}}} \widehat{Y}\xrightarrow{\substack{\varphi_{\theta_{\epsilon}}^{-1}\\\sim}}
Y
\xrightarrow{\overline{f}} X,
\end{equation}
where $\varphi_{\theta_{\epsilon}}$ is an isomorphism because $\theta_{\epsilon}$ is unimodular, that is, the degree of $\theta_{\epsilon}^{\sum_j g_j}/(\sum_jg_j)!$ is equal to $\pm 1$; see \cite[The Riemann--Roch Theorem p. 150]{mumford1970abelian}. 
To finish the proof, it remains to show that the composite \eqref{eq:composition} equals $(n\cdot m)_X$, where $m\coloneqq \deg(\theta^g/g!)$. To establish this, we adapt the argument given in \cite[Section 4]{achter2023prym}
and use the endomorphisms $\delta(\beta,\gamma)$ associated to cycles $\beta,\gamma$ defined by Matsusaka \cite[\S 4.2 and \S 4.3]{achter2023prym}.
More precisely, we have
\begin{equation}\label{eq:delta-1}
\delta(n\cdot \theta^{g-1}/(g-1)!, \theta)= -n\cdot \deg(\theta^{g}/g!)\cdot \id = - (n\cdot m)_{X}
\end{equation}
by \cite[Proposition 4.3 (c)]{achter2023prym}
and
\begin{equation}\label{eq:delta-2}
\delta(n\cdot \theta^{g-1}/(g-1)!, \theta)=\sum_j\epsilon_j\delta((f_j)_*C_j, \theta)=-\sum_j\epsilon_j \overline{f}_j\varphi_{\theta_{C_j}}^{-1}\widehat{\overline{f}}_j\varphi_\theta
\end{equation}
by \cite[Lemma 3.2, Proposition 4.4]{achter2023prym}.
The conclusion follows from the combination of \eqref{eq:delta-1} and \eqref{eq:delta-2}.
\end{proof}

\section{N\'eron--Severi group, the Chern character of the Poincar\'e line bundle, and the direct summand property}\label{section:neron}

For an abelian variety $X$ over a field $k$, the pullback map
$\on{NS}(X)\rightarrow \on{NS}(X_{\ov{k}})^G$ is not necessarily surjective; see \cite[Propositions 26 - 30]{poonen1999cassels}.
Nevertheless, the following holds:

\begin{prop}\label{lem:neron-severi}
Let $k$ be a field and let $X$ be an abelian variety over $k$. Suppose that there exists an odd integer $n$ such that the multiplication-by-$n$-map $n_X\colon X\rightarrow X$ factors through a product of Jacobians of curves which contain a zero-cycle of degree $1$. Then the pullback map $\on{NS}(X)\to \on{NS}(X_{\ov{k}})^G$ is an isomorphism.
\end{prop}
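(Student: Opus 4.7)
The map $\on{NS}(X)\to \on{NS}(X_{\ov{k}})^G$ is always injective by the diagram \eqref{pic-k-points}, so the content is surjectivity. Fix $\alpha\in \on{NS}(X_{\ov{k}})^G$, and let $\delta_X(\alpha)\in H^1(k,\widehat{X})$ denote the class coming from the connecting homomorphism in \eqref{pic-k-points}; the goal is $\delta_X(\alpha)=0$. My plan combines two vanishing relations: by (the proof of) \Cref{lem:neron-severi-sep}(3), $2\delta_X(\alpha)=0$; and from the factorization I will derive $n\delta_X(\alpha)=0$. Since $n$ is odd, $\gcd(2,n)=1$, and the two relations together force $\delta_X(\alpha)=0$.

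To obtain $n\delta_X(\alpha)=0$, I would first show that $g^*\alpha\in \on{NS}(J)$, i.e., $\delta_J(g^*\alpha)=0$. Granting this, functoriality of the connecting homomorphism applied to $g$ gives $\widehat{g}_*\delta_X(\alpha)=\delta_J(g^*\alpha)=0$. Dualizing $g\circ f=n_X$ yields $\widehat{f}\circ\widehat{g}=n_{\widehat{X}}$, so applying $\widehat{f}_*$ produces $n\delta_X(\alpha)=\widehat{f}_*\widehat{g}_*\delta_X(\alpha)=0$ as desired.

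To establish $g^*\alpha\in \on{NS}(J)$, I would invoke the K\"unneth decomposition of the N\'eron--Severi group of a product of abelian varieties:
\[
\on{NS}(J_{\ov{k}})=\bigoplus_j\on{pr}_j^*\on{NS}(J(C_j)_{\ov{k}})\oplus\bigoplus_{i<j}\on{Hom}(J(C_i)_{\ov{k}},\widehat{J(C_j)}_{\ov{k}}),
\]
together with the analogous decomposition of $\on{NS}(J)$. The cross (Hom) summands descend to $k$ automatically since $\mathbf{Hom}$ of abelian varieties is an \'etale $k$-group scheme. Hence the surjectivity of $\on{NS}(J)\to \on{NS}(J_{\ov{k}})^G$ reduces to a single-factor question: \emph{for a smooth projective geometrically integral curve $C$ over $k$ admitting a degree-$1$ zero-cycle, the pullback $\on{NS}(J(C))\to \on{NS}(J(C)_{\ov{k}})^G$ is an isomorphism}.

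This key claim will be the heart of the matter, and I expect it to be the main obstacle. The input from the hypothesis is that the degree-$1$ zero-cycle on $C$ gives a $k$-rational Abel--Jacobi map $\iota\colon C\to J(C)$ and makes each $W_i(C)\subset J(C)$ defined over $k$ (see \Cref{theta-divisor}); in particular $\theta_C=[W_{g-1}(C)]\in \on{NS}(J(C))$, so $\varphi_{\theta_C}\colon J(C)\xrightarrow{\sim}\widehat{J(C)}$ is a $k$-isomorphism. Identifying $\on{NS}(J(C)_{\ov{k}})^G$ with $\on{End}^{\mathrm{sym}}(J(C))$ via \eqref{phi-eq-sym-k}, the obstruction sequence becomes $0\to J(C)\to \mathbf{Pic}_{J(C)/k}\to \mathbf{End}^{\mathrm{sym}}(J(C))\to 0$, and the resulting connecting map $\delta\colon \on{End}^{\mathrm{sym}}(J(C))\to H^1(k,J(C))$ already satisfies $\delta(\on{id})=0$ and $2\delta\equiv 0$. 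Upgrading $2\delta\equiv 0$ to $\delta\equiv 0$ on the nose is the hard step; the natural route is to construct sufficiently many line bundles on $J(C)$ over $k$ from the $k$-rational $1$-cycles $(\psi\circ\iota)_*[C]$ for $\psi\in \on{End}(J(C))$, using the Matsusaka-style correspondences relating such cycles to endomorphisms, as in the proof of \Cref{prop:minimal-class-direct-summand}.
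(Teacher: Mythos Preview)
Your overall architecture matches the paper's: reduce $\on{NS}(J)\to\on{NS}(J_{\ov{k}})^G$ for the product $J=\prod_j J(C_j)$ to single factors via the K\"unneth decomposition (the $\Hom$ summands descend by Galois descent), then transfer back to $X$ using the factorization together with the $2$-primary fact from \Cref{lem:neron-severi-sep}(3). Your phrasing via the connecting map $\delta$ and the identity $\widehat{f}_*\widehat{g}_*=n_{\widehat{X}*}$ yields $n\,\delta_X(\alpha)=0$; the paper instead pulls back directly, obtaining $n^2\beta=(n_X)^*\beta=f^*h^*\beta\in\on{NS}(X)$. The two are interchangeable for the conclusion.

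The genuine gap is the single-Jacobian step, which you correctly flag as the crux but do not prove. Your suggested route---manufacturing line bundles on $J(C)$ from the $k$-rational $1$-cycles $(\psi\circ\iota)_*[C]$ via Matsusaka's $\delta(\cdot,\cdot)$---does not obviously produce \emph{divisors} (for $g\geq 2$ these are codimension-$(g-1)$ cycles), and Matsusaka's construction outputs an endomorphism, i.e.\ a symmetric homomorphism, which is exactly the datum you are trying to lift to a line bundle; so the argument risks circularity. The paper's proof of this step is much simpler and bypasses endomorphisms entirely: pull back the sequence $0\to\mathbf{Pic}^0_{J(C)/k}\to\mathbf{Pic}_{J(C)/k}\to\mathbf{NS}_{J(C)/k}\to 0$ along the Abel--Jacobi map $\iota\colon C\to J(C)$ to the sequence $0\to\mathbf{Pic}^0_{C/k}\to\mathbf{Pic}_{C/k}\to\underline{\Z}\to 0$. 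The induced map on $\mathbf{Pic}^0$ is an isomorphism (autoduality of the Jacobian), so the first sequence is the pullback of the second along $\mathbf{NS}_{J(C)/k}\to\underline{\Z}$; since the second sequence splits (the degree-$1$ zero-cycle gives a section of the degree map), so does the first, and $\delta_{J(C)}\equiv 0$.
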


\begin{proof}
We will prove that the pullback map $\on{NS}(X)\to \on{NS}(X_{k_s})^G$ is an isomorphism; combined with \Cref{lem:neron-severi-sep}, this will imply (2). The map $\on{NS}(X)\to \on{NS}(X_{k_s})^G$ is injective by (\ref{pic-k-points}). We prove its surjectivity in three steps.

Suppose first that $X=J(C)$ is the Jacobian of a smooth projective geometrically integral $k$-curve $C$ which admits a zero-cycle $\alpha$ of degree $1$. Letting $f\colon C\rightarrow X$ be the morphism determined by $\alpha$, pullback along $f$ yields a commutative diagram with exact rows
\begin{equation}\label{diag:c-jc}
\begin{tikzcd}
    0\arrow[r] & \mathbf{Pic}_{X/k}^0 \arrow[r] \arrow[d,"\wr"] & \mathbf{Pic}_{X/k} \arrow[r]\arrow[d,"f^*"] & \mathbf{NS}_{X/k} \arrow[r] \arrow[d] & 0 \\
     0\arrow[r] & \mathbf{Pic}_{C/k}^0 \arrow[r] & \mathbf{Pic}_{C/k} \arrow[r] & \mathbf{NS}_{C/k} \arrow[r] & 0, 
\end{tikzcd}
\end{equation}
where the left vertical map is an isomorphism by \cite[Lemma 3.1]{achter2023prym}.
It follows that the top row of (\ref{diag:c-jc}) is the pullback of the bottom row. The group scheme $\mathbf{NS}_{C/k}$ is isomorphic to the constant group scheme $\Z$, and the map $\mathbf{Pic}_{C/k}\to \Z$ is the degree map. Since $C$ admits a zero-cycle of degree $1$, the bottom row of (\ref{diag:c-jc}) is split, and hence so is the top row. In view of (\ref{pic-k-points}), we conclude that the map $\on{NS}(X)\to \on{NS}(X_{k_s})^G$ is an isomorphism.

Suppose now that $X = \prod_{i=1}^nJ(C_i)$, where $C_1,\dots, C_n$ are smooth projective geometrically integral $k$-curves, each with a zero-cycle of degree $1$. Then
\[
\on{NS}(X)=\left(\bigoplus_{i=1}^n\on{NS}(J(C_i))\right)\oplus \left(\bigoplus_{1\leq i<j\leq n}\Hom(J(C_i),\widehat{J(C_j)})\right).
\]
By the previous part of the proof, the pullback maps  $\on{NS}(J(C_i))\to\on{NS}(J(C_i)_{k_s})^G$ are isomorphisms for all $1\leq i\leq n$. By Galois descent, the pullback maps $\Hom(J(C_i),\widehat{J(C_j)})\to\Hom(J(C_i)_{k_s},\widehat{J(C_j)}_{k_s})^G$ are isomorphisms, for all $1\leq i<j\leq n$. We conclude that the pullback map $\on{NS}(X)\to\on{NS}(X_{k_s})^G$ is an isomorphism.

Finally, consider the general case: there exists an odd integer $n$ such that $n_X\colon X\rightarrow X$ factors as $X\xrightarrow{f}Y\xrightarrow{h}X$, where $Y$ is a product of Jacobians of curves which contain a zero-cycle of degree $1$.
Let $\beta\in \on{NS}(X_{k_s})^G$.
By the previous part of the proof, there exists $\gamma\in \on{NS}(Y)$ such that $h^*\beta =\gamma_{k_s}$ in $\on{NS}(Y_{k_s})^G$.
Then $n^2\cdot \beta = f^*h^*\beta = (f^*\gamma)_{k_s}$ in $\on{NS}(X_{k_s})^G$.
On the other hand, by \Cref{lem:neron-severi-sep}(3), there exists a positive integer $m$ such that the element $2^m\cdot \beta\in \on{NS}(X_{k_s})^G$ comes from $\on{NS}(X)$.
Since $n^2$ and $2^m$ are coprime, we conclude that $\beta$ comes from $\on{NS}(X)$.
The proof is complete.
\end{proof}

\begin{prop}\label{prop:fourier-jacobians}
Let $k$ be a field of characteristic $p\geq 0$,
and let $X$ be a product of Jacobians of curves which contain a zero-cycle of degree $1$ over $k$.
\begin{enumerate}
    \item For every prime $\ell$, the class of $\ch(\mathcal{P})$ in $\ell$-adic cohomology $(\ell\neq p)$ and in crystalline cohomology $(\ell=p>0)$ is algebraic.
    \item Suppose that $k$ is finite. Then $\ch(\mathcal{P})\in CH^*(X\times\widehat{X})_{\Q}$ is integral.
\end{enumerate} 
\end{prop}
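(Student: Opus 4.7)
The plan is to reduce to the case of a single Jacobian via the exterior product decomposition of the Poincar\'e bundle, and then to rewrite $\ch(\mathcal{P})$ in terms of divided powers of the canonical polarization class, which are handled by the Poincar\'e formula. For a product $X = X_1 \times X_2$ of abelian varieties, the Poincar\'e bundle decomposes as $\mathcal{P}_X \cong \pi_{13}^*\mathcal{P}_{X_1} \otimes \pi_{24}^*\mathcal{P}_{X_2}$ on $X_1 \times X_2 \times \widehat{X_1} \times \widehat{X_2}$, hence $\ch(\mathcal{P}_X) = \pi_{13}^*\ch(\mathcal{P}_{X_1}) \cdot \pi_{24}^*\ch(\mathcal{P}_{X_2})$. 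Since both algebraicity in cohomology and integrality in $CH^*_{\Q}$ are preserved under such products, I may assume $X = J(C)$ for a single smooth projective geometrically integral $k$-curve $C$ carrying a degree-$1$ zero-cycle $\alpha$.

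In this case, the canonical principal polarization $\theta = [W_{g-1}(C)] \in \on{NS}(X_{\ov{k}})^G$ is represented, by \Cref{lem:neron-severi} applied with $n=1$ to the trivial factorization $X = J(C)$, by a line bundle $L$ on $X$, and $\varphi_L\colon X \xrightarrow{\sim}\widehat{X}$ is a $k$-isomorphism. Using $(\id \times \varphi_L)^*\mathcal{P} \cong m^*L \otimes \on{pr}_1^*L^{-1} \otimes \on{pr}_2^*L^{-1}$ and expanding the exponential, one obtains
\[
(\id \times \varphi_L)^*\ch(\mathcal{P}) = \sum_{a,b,c \geq 0}(-1)^{b+c}\, m^*\!\left(\tfrac{\theta^a}{a!}\right)\cdot \on{pr}_1^*\!\left(\tfrac{\theta^b}{b!}\right)\cdot \on{pr}_2^*\!\left(\tfrac{\theta^c}{c!}\right),
\]
which reduces (1) and (2) to the assertion that each divided power $\theta^j/j!$, $0 \leq j \leq g$, is algebraic in cohomology of $X$ (for (1)), respectively integral in $CH^j(X)_\Q$ (for (2)).

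For (1), the Abel--Jacobi morphism $\on{Sym}^{g-j}(C) \to J(C),\ \beta \mapsto \beta - (g-j)\alpha,$ is a $k$-morphism via $\alpha$, so $W_{g-j}(C)$ is a $k$-subvariety of $X$. By the Poincar\'e formula (\Cref{theta-divisor}, following Matsusaka--Mattuck), $[W_{g-j}(C)] = \theta^j/j!$ in $(CH^j(X_{\ov{k}})/\on{num})_\Q$; since numerical and homological equivalence coincide for cycles with $\Q$-coefficients on an abelian variety, and since the $\ell$-adic and crystalline cohomology groups of an abelian variety are torsion-free, this equality already holds in $H^{2j}(X_{\ov{k}},\Z_\ell(j))$ and in $H^{2j}(X_{k_p}/W(k_p))$. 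Algebraicity over $k$ follows.

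For (2), under the hypothesis $k = \F_q$, the same reduction requires the stronger equality $\theta^j/j! = [W_{g-j}(C)]$ in $CH^j(X)_\Q$ itself, not merely modulo numerical equivalence. This is the main obstacle: the a priori numerically trivial correction $\theta^j/j! - [W_{g-j}(C)] \in CH^j(J(C))_\Q$ must be shown to vanish over $\F_q$. I plan to handle this by refining the Chow-level computations underlying Mattuck's formula, combined with the vanishing of positive Beauville weight components of $CH^*(J(C))_\Q$ over finite fields, which is governed by the action of Frobenius and the Kimura--O'Sullivan finite-dimensionality of the motive of a Jacobian.
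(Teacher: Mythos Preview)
Your reduction to a single Jacobian via multiplicativity of the Chern character is the same as the paper's. After that point the approaches diverge, and your route runs into a genuine obstacle in positive characteristic.

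For part~(1), you expand $(\id\times\varphi_L)^*\ch(\mathcal{P})$ and reduce to the algebraicity of $\theta^j/j!$ for \emph{every} $0\le j\le g$. You then justify this by the Poincar\'e formula $[W_{g-j}(C)]=\theta^j/j!$ modulo numerical equivalence together with the assertion that ``numerical and homological equivalence coincide for cycles with $\Q$-coefficients on an abelian variety.'' This last statement is the standard conjecture $D$; it is known in characteristic~$0$ (Lieberman), but it is \emph{open} for abelian varieties in characteristic $p>0$ for intermediate codimensions. The paper avoids this entirely: it uses only the case $j=g-1$, where one is dealing with $1$-cycles and the equality of numerical and homological equivalence is known (cf.\ \cite[\S4.1]{achter2023prym}), and then invokes \cite[Proposition~3.11, $2\Leftrightarrow5$]{beckmann2023integral} to pass from the algebraicity of the single minimal class $\theta^{g-1}/(g-1)!$ to the algebraicity of $\ch(\mathcal{P})$. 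That implication uses the Moonen--Polishchuk divided-power structure on the Pontryagin ring and is precisely what lets one bypass the need for all the intermediate $\theta^j/j!$. (One could also try to prove the Poincar\'e formula directly in $\ell$-adic and crystalline cohomology, but that is a separate argument you have not supplied.)

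For part~(2), your reduction likewise demands $\theta^j/j!\in CH^j(X)_\Q$ to be integral for every~$j$, and you acknowledge the difficulty: the correction $\theta^j/j!-[W_{g-j}(C)]$ need not vanish in $CH^j(J(C))_\Q$ in general (for instance, the Ceresa cycle contributes for $j=g-1$). Your proposed fix via vanishing of positive Beauville components over finite fields and Kimura finite-dimensionality is essentially Beilinson's conjecture that rational and numerical equivalence agree over $\F_q$, which is not known beyond $0$-cycles and $1$-cycles. The paper again works only with the $1$-cycle case: over a finite field, $CH_1(X)_\Q\to (CH_1(X)/\mathrm{num})_\Q$ is an isomorphism by Soul\'e, so $\theta^{g-1}/(g-1)!$ lifts to $[W_1(C)]$; then \cite[Lemma~3.5]{beckmann2023integral} expresses $c_1(\mathcal{P})^{2g-1}/(2g-1)!$ integrally in terms of this lift (using that $\theta_C\in CH^1(X)_\Q$ is automatically symmetric since $\Pic^0(X)$ is finite), and the Moonen--Polishchuk divided powers recover all of $\ch(\mathcal{P})$ from its top piece.

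In short, your expansion is correct but produces an unnecessarily strong reduction; the key idea you are missing is that the single minimal class $\theta^{g-1}/(g-1)!$ already controls $\ch(\mathcal{P})$ via the Pontryagin divided-power structure, which is what makes the argument go through unconditionally.
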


\begin{proof}
(1) Let $X=\prod_{i=1}^nJ(C_i)$, where the $C_i$ are smooth projective geometrically integral $k$-curves which contain a zero-cycle of degree $1$. Then $\mathcal{P}_X=\bigotimes_i \on{pr}_i^*\mathcal{P}_{J(C_i)}$. By the multiplicativity of the Chern character, we are reduced to the case $n=1$, that is, we may assume that $X=J(C)$, where  $C$ is a smooth projective geometrically integral $k$-curve which contains a zero-cycle of degree $1$.
Let $\theta_C\in NS(X_{\ov{k}})^{G}$ be the theta divisor; see \Cref{theta-divisor}.
The pullback map $NS(X)\to NS(X_{\ov{k}})^G$ is an isomorphism by \Cref{lem:neron-severi}, hence $\theta_C$ descends to a class in $NS(X)$, which we also denote by $\theta_C$.

By \cite[Section 4.1]{achter2023prym}, for every abelian variety $Y$ over $k$, numerical equivalence and homological equivalence coincide for $1$-cycles on $Y$, that is, the quotient map
\[CH_1(Y)/\mathrm{hom}\to CH_1(Y)/\mathrm{num}\]
is an isomorphism. 
Combined with \cite[Appendix]{matsusaka1959characterization} (see also \cite[Section 2]{mattuck1962symmetric}), we deduce that the map
\[CH_1(X) \to (CH_1(X)/\mathrm{hom})_{\Q}\]
sends $[W_1(C)]$ to $\theta_C^{g-1}/(g-1)!$. 
In particular, for every prime number $\ell\neq p$, the map
\[CH^{g-1}(X)_{\Z_\ell} \to H^{2g-2}(X_{\ov{k}},\Z_\ell(g-1))^G\]
sends $[W_1(C)]$ to the class of $\theta_C^{g-1}/(g-1)!$,
and if $p>0$, the same is true for the map
\[
CH^{g-1}(X)_{\Z_p}\rightarrow H^{2g-2}(X_{k_p}/W(k_p))^{F=p^{g-1}}.
\]
One then deduces from \cite[Proposition 3.11 2$\Leftrightarrow$5]{beckmann2023integral} that 
for every prime number $\ell\neq p$,
the class of $\ch(\mathcal{P})$ in $H^{2*}((X\times \widehat{X})_{\ov{k}},\Z_\ell(*))^G$ is algebraic.
If $p>0$, an argument analogous to the proof of \cite[Proposition 3.11]{beckmann2023integral} using \cite[Lemma 3.11]{beckmann2023integral} shows that the class of $\ch(\mathcal{P})$ in $H^{2*}((X\times \widehat{X})_{k_p}/W(k_p))^{F=p^*}$ is algebraic.

(2) When $k$ is finite, by \cite[Th\'eor\`eme 4, i)]{soule1984groupes} and \cite[Section 4.1]{achter2023prym}, the natural quotient map
\[CH_1(X)_{\Q}\to (CH_1(X)/\mathrm{num})_{\Q},\]
is an isomorphism, and hence $\theta_C^{g-1}/(g-1)!\in CH^{g-1}(X)_\Q$ lifts to the integral cycle $[W_1(C)]\in CH^{g-1}(X)$.

At this point, if we knew that some lift of $\theta_C$ in $CH^1(X)$ is symmetric, i.e. that it is invariant under the action of $(-1)_{X}$ on $CH^1(X)$, the proof would be complete by \cite[Theorem 3.8, 2$\Leftrightarrow$5]{beckmann2023integral}. It is however unlikely that this is always the case. On the other hand, for every abelian variety $Y$ over $k$, the group $\Pic^0(Y)=\hat{Y}(k)$ is finite, and hence by (\ref{pic-k-points}) the quotient map
\[CH^1(Y)_{\Q}\to \on{NS}(Y)_{\Q}\]
is an isomorphism. Since any class in $\on{NS}(Y)$ is symmetric by \cite[\S 8, (iv)]{mumford1970abelian}, we deduce that any class in $CH^1(Y)_{\Q}$ is symmetric, that is, it is invariant under the action of $(-1)_Y$ on $CH^1(Y)_{\Q}$. In particular, the class $\theta_C\in CH^1(X)_{\Q}$ is symmetric.

Identify $X$ and $\widehat{X}$ by $\varphi_{\theta_C}\colon X\xrightarrow{\sim}\widehat{X}$, let $\delta \colon X\rightarrow X\times X$ be the diagonal map, and for $i=1,2$ let $s_i\colon X\rightarrow X\times X$ be the inclusion of $X$ in $X\times X$ as the $i$-th factor. Define 
\[\Theta\coloneqq \frac{1}{2}\delta^*c_1(\mathcal{P}).\]
By \cite[\S  6, Corollary 3]{mumford1970abelian} and the fact that the class $\theta_C\in CH^1(X)_{\Q}$ is symmetric, we have the following string of equalities in $CH^1(X)_{\Q}$:
\[\Theta =\frac{1}{2}\delta^*(m^*- \on{pr}_1^*- \on{pr}_2^*)\theta_C =\frac{1}{2}((2_{X})^*\theta_C - 2\theta_C) =\theta_C.\]
Therefore, by \cite[Lemma 3.5]{beckmann2023integral}, we have
\begin{align*}
    c_1(\mathcal{P})^{2g-1}/(2g-1)! &=(-1)^{g+1}((s_1)_*+(s_2)_*-\delta_*)(\Theta^{g-1}/(g-1)!) \\
    &=(-1)^{g+1}((s_1)_*+(s_2)_*-\delta_*)(\theta_C^{g-1}/(g-1)!)
\end{align*}
in $CH^{2g-1}(X\times X)_{\Q}$.
This shows that $c_1(\mathcal{P})^{2g-1}/(2g-1)!\in CH^{2g-1}(X\times X)_{\Q}$ lifts to 
\[
\alpha\coloneqq 
(-1)^{g+1}((s_1)_*+(s_2)_*-\delta_*)[W_1(C)]\in CH^{2g-1}(X\times X).\]
Finally, for every integer $i\geq 0$, let $(-)^{[i]}$ be the $i$-th divided power operation on $CH_{>0}(X\times X)$, as defined by Moonen--Polishchuk \cite{moonen2010divided} (see also \cite[Theorem 3.7]{beckmann2023integral}). Since $c_1(\mathcal{P})^{2g-1}/(2g-1)!$ lifts to $\alpha$, by \cite[Lemma 3.4]{beckmann2023integral}, the class $\ch(\mathcal{P})\in CH^*(X\times X)_{\Q}$ lifts to \[
(-1)^{g}\sum_{i\geq 0}((-1)^g\alpha)^{[i]}\in
CH^*(X\times X),\]
and hence it is integral, as desired.
\end{proof}

\section{Proof of Theorem \ref{thm:main1}}\label{section:main}

The goal of this section is to prove \Cref{thm:main1}. In fact, we will deduce \Cref{thm:main1} from the more general Theorems \ref{thm:main1-precise} and \ref{thm:main2-precise}. We will also prove Corollaries \ref{cor:main} and \ref{thm:direct-summand-tate-consequence}.

\begin{df}
Let $k$ be a field, let $X, Y$ be abelian varieties over $k$, and let $S$ be a set of prime numbers.
We say that {\it $X$ is a prime-to-$S$ direct summand of $Y$} if there exists a non-zero integer $n$ that is not divisible by any prime in $S$ such that the multiplication-by-$n$ map $n_X\colon X\rightarrow X$ factors through $Y$.
\end{df}

Given a non-zero rational number $u=a/b$, where $a$ and $b$ are coprime integers, and a set of primes $S$, we say that $u$ is \emph{prime to $S$} if $a$ and $b$ are not divisible by any prime of $S$. Equivalently, $u$ is prime to $S$ if there exists an integer $m$, not divisible by any prime in $S$, such that $um$ is an integer not divisible by any prime in $S$.

\begin{thm}\label{thm:main1-precise}
Let $k$ be a field of characteristic $p\geq 0$, let $X$ be an abelian variety over a field $k$, of dimension $g$ and with Poincar\'e line bundle $\mathcal{P}$.
Let $S$ be a set of prime numbers and $R\coloneqq \cap_{\ell\in S}\Z_{(\ell)}$.
The following are equivalent:
\begin{enumerate}
\item The abelian variety $X$ is a  prime-to-$S$ direct summand of a product of Jacobians of curves which contain a $k$-point.
\item The abelian variety $X$ is a  prime-to-$S$ direct summand of a product of Jacobians of curves which contain a zero-cycle of degree $1$.
\item The classes of $\ch(\mathcal{P})$ in $H^{2*}((X\times \widehat{X})_{\ov{k}},\Z_\ell(*))^G$ for all $\ell\in S\setminus \{p\}$ and in $H^{2*}(X_{k_p}/W(k_p))^{F=p^*}$ for $\ell=p\in S$ are algebraic. If $k$ is finite, $\ch(\mathcal{P})$ is $S$-integral, that is, it belongs to the image of the homomorphism $CH^*(X\times\widehat{X})_R\to CH^*(X\times\widehat{X})_\Q$. 
\end{enumerate}
If there exists $\theta\in \on{NS}(X_{\ov{k}})^G_R$ such that $\deg(\theta^g/g!)$ is prime to $S$, then (1) - (3) are also equivalent to the following: 
\begin{enumerate}
\setcounter{enumi}{3}
\item 
Let $i\in \{1,g-1\}$.
The classes of $\theta^{i}/i!$ in $H^{2i}(X_{\ov{k}},\Z_\ell(i))^G$ for all $\ell\in S\setminus \{p\}$ and in $H^{2i}(X_{k_p}/W_{k_p})^{F=p^i}$ for $\ell=p\in S$ are algebraic.
\item The classes of $\theta^{g-1}/(g-1)!$ in 
$H^{2g-2}(X_{\ov{k}},\Z_\ell(g-1))^G$ for all $\ell\in S\setminus \{p\}$ and in $H^{2g-2}(X_{k_p}/W(k_p))^{F=p^{g-1}}$ for $\ell=p\in S$ are algebraic.
\end{enumerate}
\end{thm}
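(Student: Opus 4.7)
The plan is to establish $(1)\Leftrightarrow(2)\Leftrightarrow(3)$ in the cycle $(1)\Rightarrow(2)\Rightarrow(3)\Rightarrow(1)$, without using the hypothesis on $\theta$, and then, under that hypothesis, to verify $(5)\Rightarrow(1)$ and $(3)\Rightarrow(4)$; the implications $(1)\Rightarrow(2)$ and $(4)\Rightarrow(5)$ are tautological. For $(2)\Rightarrow(3)$, write the factorization as $n_X=h\circ f$ with $f\colon X\to Y$, $h\colon Y\to X$, and $Y$ a product of Jacobians of curves with a zero-cycle of degree $1$; dualizing gives $n_{\widehat X}=\widehat f\circ\widehat h$. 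The morphism $\Psi\coloneqq f\times\widehat h\colon X\times\widehat X\to Y\times\widehat Y$ then satisfies $\Psi^*\mathcal P_Y\cong\mathcal P_X^{\otimes n}$, by applying the universal property of the Poincar\'e bundle to the decomposition $\Psi=(f\times\on{id}_{\widehat Y})\circ(\on{id}_X\times\widehat h)$. Hence $\Psi^*\ch(\mathcal P_Y)=\exp(n\cdot c_1(\mathcal P_X))$, whose codimension-$i$ component is $n^i c_1(\mathcal P_X)^i/i!$; since $n$ is a unit in $R$ and in $\Z_\ell$ for each $\ell\in S$, the algebraicity (and $S$-integrality when $k$ is finite) of $\ch(\mathcal P_Y)$ provided by \Cref{prop:fourier-jacobians} transfers to $\ch(\mathcal P_X)$ component by component.

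For $(3)\Rightarrow(1)$, apply \Cref{prop:minimal-class-direct-summand} to the abelian variety $X\times\widehat X$ of dimension $2g$ equipped with $c_1(\mathcal P)\in\on{NS}((X\times\widehat X)_{\ov k})^G$. Condition (i) holds for every $S$ because $\deg(c_1(\mathcal P)^{2g}/(2g)!)=(-1)^g$ by the Mukai formula $R(\on{pr}_2)_*\mathcal P=k_{0_{\widehat X}}[-g]$. For condition (ii), hypothesis $(3)$ provides, for every $\ell\in S$, a lift of $c_1(\mathcal P)^{2g-1}/(2g-1)!$ to $CH^{2g-1}(X\times\widehat X)\otimes\Z_\ell$; combined with the coincidence of homological and numerical equivalence for $1$-cycles on abelian varieties (as in the proof of \Cref{prop:fourier-jacobians}) and a local-global argument on the finitely generated abelian group $CH^{2g-1}(X\times\widehat X)/\mathrm{num}$, this yields a lift in $(CH^{2g-1}(X\times\widehat X)/\mathrm{num})_R$. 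The proposition then factors $N_{X\times\widehat X}$, for some $N$ prime to $S$, through a product of Jacobians of curves with a $k$-point; pre- and post-composing with the zero section $X\hookrightarrow X\times\widehat X$ and the first projection yields the corresponding factorization of $N_X$, proving (1).

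Assume now the existence of $\theta$. Then $(5)\Rightarrow(1)$ is a direct application of \Cref{prop:minimal-class-direct-summand} to $X$ with $\theta$, using the hypothesis on $\deg(\theta^g/g!)$ for (i) and the algebraicity of $\theta^{g-1}/(g-1)!$ together with the same local-global argument for (ii). For $(3)\Rightarrow(4)$, the already established $(3)\Rightarrow(1)$ shows that $X$ is a prime-to-$S$ direct summand of a product of Jacobians of curves with a $k$-point; the algebraicity of $\theta$ then follows from \Cref{lem:neron-severi} when $2\in S$ (one may take $n$ odd, so $\on{NS}(X)\xrightarrow{\sim}\on{NS}(X_{\ov k})^G$) and from \Cref{lem:neron-severi-sep}(3) when $2\notin S$ (so $2$ is a unit in $R$), each identifying $\theta\in\on{NS}(X_{\ov k})^G_R$ with an element of $\on{NS}(X)_R$, and in particular making $\theta$ $R$-algebraic. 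The algebraicity of $\theta^{g-1}/(g-1)!$ is extracted from $(3)$ by pulling $\ch(\mathcal P)$ back along $\on{id}_X\times\phi_\theta\colon X\times X\to X\times\widehat X$---an isogeny of degree $\deg(\theta^g/g!)^2$ prime to $S$---to obtain $\exp(m^*\theta-\on{pr}_1^*\theta-\on{pr}_2^*\theta)$ on $X\times X$, and then applying Beauville-style Fourier calculus (exploiting the algebraicity of $\ch(\mathcal P)$ and of $\widehat\theta$ on $\widehat X$, obtained by a symmetric argument) to isolate the class $\theta^{g-1}/(g-1)!$.

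The main obstacle will be the last step, the extraction of $\theta^{g-1}/(g-1)!$ in $(3)\Rightarrow(4)$: this requires a uniform Fourier--Beauville identity valid in $\ell$-adic and crystalline cohomology over arbitrary base fields, with all isogeny-degree denominators being units in $R$. A secondary technical point is the local-global passage used in $(3)\Rightarrow(1)$ and $(5)\Rightarrow(1)$, whose validity rests on the finite generation of $CH^{2g-1}/\mathrm{num}$ for the relevant abelian variety and on the fact that the cycle map $(CH^{2g-1}/\mathrm{num})_{\Q_\ell}\hookrightarrow H^{4g-2}(-,\Q_\ell(2g-1))$ is injective for $1$-cycles on abelian varieties.
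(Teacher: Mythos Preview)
Your proposal is correct and follows essentially the same route as the paper: the cycle $(1)\Rightarrow(2)\Rightarrow(3)\Rightarrow(1)$ via \Cref{prop:fourier-jacobians} and \Cref{prop:minimal-class-direct-summand} applied to $(X\times\widehat X,\,c_1(\mathcal P))$, and the chain $(4)\Rightarrow(5)\Rightarrow(1)$ via \Cref{prop:minimal-class-direct-summand} applied to $(X,\theta)$, together with \Cref{lem:neron-severi} and \Cref{lem:neron-severi-sep}(3) for the algebraicity of $\theta$, all match the paper's argument (your pure-pullback computation $\Psi^*\mathcal P_Y\cong\mathcal P_X^{\otimes n}$ in $(2)\Rightarrow(3)$ is in fact slightly cleaner than the paper's push--pull yielding $n^{4g}\ch(\mathcal P_X)$). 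For the step you flag as the main obstacle---isolating $\theta^{g-1}/(g-1)!$---the paper supplies exactly the identity you anticipate, namely $\theta^{g-1}/(g-1)! = (-1)^{g-1}\bigl(\deg(\theta^g/g!)\bigr)^{-1}(\varphi_\theta)^*\bigl((c_1(\mathcal P)^{2g-2}/(2g-2)!)_*\,\theta\bigr)$ from \cite[Proposition 5]{beauville1983fourier}, using the degree-$(2g-2)$ component of $\ch(\mathcal P)$ directly as a correspondence $X\rightsquigarrow\widehat X$ rather than first pulling back to $X\times X$.
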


\begin{proof}
$(1) \Longrightarrow (2)$. Obvious.

$(2)\Longrightarrow(3)$. Let $Y$ be a product of Jacobians over $k$ and let $n$ be an integer not divisible by any prime in $S$ such that $n_X\colon X\rightarrow X$ factors as $X\xrightarrow{f} Y\xrightarrow{h} X$. By \cite[Theorem p. 125]{mumford1970abelian}, we have an isomorphism \[(\id \times \widehat{h})^*\mathcal{P}_Y\cong(h\times \id)^*\mathcal{P}_X\] of line bundles on $Y\times \widehat{X}$. Therefore
\begin{equation}\label{eq:poincare}
(n_{X\times \widehat{X}})_*(\id \times n_{\widehat{X}})^*(f\times \widehat{h})^*\ch(\mathcal{P}_Y)=(n_{X\times \widehat{X}})_*(n_{X\times \widehat{X}})^*\ch(\mathcal{P}_X)=n^{4g}\cdot \ch(\mathcal{P}_X)
\end{equation}
in $CH^*(X\times \widehat{X})_{\Q}$. Since by \Cref{prop:fourier-jacobians} the classes of $\ch(\mathcal{P}_Y)$ in integral $\ell$-adic (resp. crystalline) cohomology for all primes $\ell\neq p$ (resp. for $\ell=p>0$) are algebraic, \eqref{eq:poincare} shows that the same is true for the classes of $\ch(\mathcal{P}_X)$ for all $\ell \in S$.
When $k$ is finite, $\ch(\mathcal{P}_Y)\in CH^*(Y\times \widehat{Y})_{\Q}$ is integral by \Cref{prop:fourier-jacobians}. Therefore $n^{4g}\cdot\ch(\mathcal{P}_X)\in CH^{*}(X\times \widehat{X})_{\Q}$ lifts to $CH^*(X\times \widehat{X})$.

$(1)\Longrightarrow (4)$. By \Cref{lem:neron-severi}, the pullback map $\on{NS}(X)_R\to \on{NS}(X_{\ov{k}})^G_R$ is an isomorphism. 
In particular, the class of $\theta$ in integral $\ell$-adic (resp. crystalline) cohomology for all primes $\ell\in S\setminus \{p\}$ (resp. for $\ell=p\in S$) are algebraic. Moreover, since (1) implies (3), 
the classes of $c_1(\mathcal{P})^{2g-2}/(2g-2)!$ in integral $\ell$-adic (resp. crystalline) cohomology for all primes $\ell\in S\setminus \{p\}$ (resp. for $\ell=p\in S$) are algebraic. The map $\varphi_\theta$ induces isomorphisms
\[(\varphi_\theta)_*\colon H^{2g-2}(X_{\ov{k}},\Z_\ell(g-1))\xrightarrow{\sim}H^{2g-2}(\widehat{X}_{\ov{k}},\Z_\ell(g-1))\]
for all $\ell \in S\setminus \{\ell\}$, and if $p\in S$, an isomorphism
\[
(\varphi_\theta)_*\colon H^{2g-2}(X_{k_p}/W(k_p))^{F=p^{g-1}}\xrightarrow{\sim}H^{2g-2}(\widehat{X}_{k_p}/W(k_p))^{F=p^{g-1}},
\]
where in any of these cases
\[
((\varphi_\theta)_*)^{-1}=(1/\deg\varphi_\theta)\cdot (\varphi_\theta)^*=(1/\deg(\theta^g/g!)^2)\cdot (\varphi_\theta)^*
\]
by \cite[The Riemann--Roch Theorem p. 150]{mumford1970abelian};
combined with \cite[Proposition 5]{beauville1983fourier}, we have
\begin{align*}
\theta^{g-1}/(g-1)!& = (-1)^{g-1}\cdot (\deg(\theta^g/g!))\cdot ((\varphi_\theta)_*)^{-1}(c_1(\mathcal{P})^{2g-2}/(2g-2)!)_*\theta\\
&=(-1)^{g-1}\cdot(1/\deg(\theta^g/g!))\cdot (\varphi_\theta)^*(c_1(\mathcal{P})^{2g-2}/(2g-2)!))_*\theta
\end{align*}
in integral $\ell$-adic (resp. crystalline) cohomology for all primes $\ell\in S\setminus \{p\}$ (resp. for $\ell=p\in S$). In particular, the classes of $\theta^{g-1}/(g-1)$ in integral $\ell$-adic (resp. crystalline) cohomology for all primes $\ell\in S\setminus \{p\}$ (resp. for $\ell=p\in S$) are algebraic,
as desired.

$(4)\Longrightarrow (5)$. Obvious.

$(5)\Longrightarrow(1)$. By \Cref{prop:minimal-class-direct-summand}, it is enough to show that the class of $\theta^{g-1}/(g-1)!$ in  $(CH^{g-1}(X_{\ov{k}})/\mathrm{num})_{\Q}$ belongs to the image of the natural map \[CH^{g-1}(X)_R\to (CH^{g-1}(X_{\ov{k}})/\mathrm{num})_{\Q}.\]
We will prove the following stronger assertion: The class of $\theta^{g-1}/(g-1)!$ in $(CH^{g-1}(X_{\ov{k}})/\mathrm{num})_{\Q}$ belongs to the subgroup $(CH^{g-1}(X)/\mathrm{num})_R$. 
Indeed, for every $\ell \in S$, by assumption the class of $\theta^{g-1}/(g-1)!$ in $(CH^{g-1}(X_{\ov{k}})/\mathrm{num})_{\Q_\ell}$ belongs to $(CH^{g-1}(X)/\mathrm{num})_{\Z_\ell}$. 
Since $\Q\cap \Z_\ell=\Z_{(\ell)}$, this implies that the class of  $\theta^{g-1}/(g-1)!$ in $(CH^{g-1}(X_{\ov{k}})/\mathrm{num})_{\Q}$ comes from $(CH^{g-1}(X)/\mathrm{num})_{\Z_{(\ell)}}$.

$(3)\Longrightarrow (1)$. Observe that $X$ is a direct summand of $X\times\widehat{X}$.
Moreover, letting $\theta\in \on{NS}((X\times \widehat{X})_{\ov{k}})^G$ be the class of $\mathcal{P}$ on $X\times \widehat{X}$, we have $\deg(\theta^{2g}/(2g)!)=(-1)^{g}$; see \cite[Corollary 1 p. 129 and The Riemann--Roch Theorem p. 150]{mumford1970abelian}.
Now $(3)\Longrightarrow (1)$ follows from $(5)\Longrightarrow (1)$ applied to $\theta$.
\end{proof}

\begin{thm}\label{thm:main2-precise}
Let $k$ be a field of characteristic $p\geq 0$,
and let $X$ be an abelian variety over $k$, of dimension $g$. Let $S$ be a set of prime numbers and $R\coloneqq \cap_{\ell\in S}\Z_{(\ell)}$.

\medskip
When $k$ is finitely generated and $p\not\in S$, consider the following:
\begin{enumerate}
\item[(1$_{fg}$)] The abelian variety $X$ is a  prime-to-$S$ direct summand of a product of Jacobians of curves which contain a $k$-point.
\item[(2$_{fg}$)] The $\ell$-adic 
cycle map (\ref{ell-adic-cycle-map}) 
is surjective for all $\ell\in S$ 
and each $i\in\left\{1,g-1\right\}$.
\item[(3$_{fg}$)] The $\ell$-adic
cycle map (\ref{ell-adic-cycle-map}) 
is surjective for all $\ell\in S$ 
and $i=g-1$.
\end{enumerate}
The statement (1$_{fg}$) implies (2$_{fg}$), (2$_{fg}$) implies (3$_{fg}$),
and if there exists $\theta\in \on{NS}(X_{\ov{k}})^G_R$ such that $\deg(\theta^g/g!)$ is prime to $S$, then (1$_{fg}$) - (3$_{fg}$) are all equivalent.

\medskip

In addition,
consider the following:
\begin{enumerate}
\item[(1$_{{fg}_s}$)]
The abelian variety $X_{k_s}$ is a  prime-to-$S$ direct summand of a product of Jacobians.
\item[(2$_{{fg}_s}$)] 
For every prime number $\ell\in S$, the cycle map
\[
CH^{g-1}(X_{k_s})_{\Z_\ell}\rightarrow H^{2g-2}(X_{\ov{k}},\Z_\ell(g-1))^{(1)}
\]
is surjective.
\end{enumerate}
Then (1$_{{fg}_s}$) implies (2$_{{fg}_s}$),
and if there exists $\theta\in \on{NS}(X_{\ov{k}})_R$ such that $\deg(\theta^g/g!)$ is prime to $S$, then (1$_{{fg}_s}$) and (2$_{{fg}_s}$) are equivalent.

\medskip
When $k=\F_q$ is finite, consider the following:
\begin{enumerate}
\item[(1$_f$)] The abelian variety $X$ is a  prime-to-$S$ direct summand of a product of Jacobians of curves which contain an $\F_q$-point.
\item[(2$_f$)] The $\ell$-adic (resp. crystalline) 
cycle map (\ref{ell-adic-cycle-map}) (resp. (\ref{p-adic-cycle-map}))
is surjective for all $\ell\in S\setminus\{p\}$ (resp. $\ell=p$ if $p\in S$) and $i=g-1$.
\end{enumerate}
Then (1$_{f}$) implies (2$_{f}$),
and if there exists $\theta\in \on{NS}(X_{\ov{\F}_q})^G_R$ such that $\deg(\theta^g/g!)$ is prime to $S$, then (1$_{f}$) and (2$_{f}$) are equivalent.

\medskip

When $k$ is the field $\R$ of real numbers, consider the following:
\begin{enumerate}
\item[(1$_r$)] The abelian variety $X$ is a  prime-to-$S$ direct summand of a product of Jacobians of curves which contain an $\R$-point.
\item[(2$_r$)] $CH^{g-1}(X)\rightarrow \on{Hdg}^{2g-2}(X_\C,\Z(g-1))^G$ has cokernel of order prime to $S$.
\end{enumerate}
Then (1$_r$) implies (2$_r$), and if there exists $\theta\in \on{NS}(X_\C)^G_R$ such that $\deg(\theta^g/g!)$ is prime to $S$, then (1$_r$) and (2$_r$) are equivalent.
\end{thm}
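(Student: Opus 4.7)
The plan is to treat the four sub-claims in parallel by reducing them to Theorem \ref{thm:main1-precise}, supplemented by a Fourier-transform argument at the level of integral cohomology.

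For each forward direction (1) $\Rightarrow$ (2), I would split the statement into its codimension-$1$ and codimension-$(g-1)$ parts. In (2$_{fg}$), the codimension-$1$ statement is the integral Tate conjecture for divisors on $X$, which is Theorem \ref{thm:tate-zarhin-faltings-dejong}; no Jacobian input is needed. For the codimension-$(g-1)$ part — the only content of (2$_{{fg}_s}$), (2$_f$), and (2$_r$) — I would invoke Theorem \ref{thm:main1-precise}(1) $\Rightarrow$ (3) to obtain that $\ch(\mc{P}_X)$ is algebraic in the relevant cohomology for every $\ell \in S$. The Fourier transform then induces an integral isomorphism $H^{2g-2}(X_{\ov k}, \Z_\ell(g-1))^G \xrightarrow{\sim} H^2(\widehat X_{\ov k}, \Z_\ell(1))^G$, and similarly in the crystalline and Betti analogues, the first of which is Lemma \ref{lem:p-adic-abelvar}(3); all three follow from the K\"unneth decomposition and the exterior-algebra structure on integral cohomology of an abelian variety, exactly as in Beauville's original argument. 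Since $\ch(\mc{P})$ is algebraic, an integral cycle representative induces the Fourier on cohomology via push-pull, so the Fourier exchanges algebraic classes in both directions. Composing with Theorem \ref{thm:tate-zarhin-faltings-dejong} applied to $\widehat X$ (in the Tate cases) or with Lefschetz $(1,1)$ and Lemma \ref{lem:neron-severi-sep}(3) (in the Hodge case) transfers algebraicity from codimension $1$ on $\widehat X$ to codimension $g-1$ on $X$.

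The implication (2$_{fg}$) $\Rightarrow$ (3$_{fg}$) is tautological. Each reverse implication, under the minimal-class hypothesis, proceeds as follows: surjectivity (or prime-to-$S$-cokernel) of the relevant cycle map immediately implies that the class of $\theta^{g-1}/(g-1)!$ in the corresponding cohomology group is algebraic for every $\ell \in S$. This is exactly hypothesis (5) of Theorem \ref{thm:main1-precise}, whose implication (5) $\Rightarrow$ (1) delivers (1).

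I expect the main obstacle to be the real case (1$_r$) $\Leftrightarrow$ (2$_r$), where the divisor integral Hodge conjecture only holds up to $2$-torsion (Lemma \ref{lem:neron-severi-sep}(3)) and propagating this $2$-torsion through the Fourier — while simultaneously tracking the prime-to-$S$ multiplier $n$ from the direct-summand factorization and the factor $\deg(\theta^g/g!)^{\pm 1}$ arising in Beauville's inverse-Fourier identity — requires careful bookkeeping so as not to inflate the cokernel of the $(g-1)$-cycle cycle map beyond primes outside $S$. A smaller subtlety in (1$_{{fg}_s}$) $\Leftrightarrow$ (2$_{{fg}_s}$) is the reduction to a finite separable extension $k'/k$ over which both the direct-summand data and the class $\theta$ are defined, together with the verification that the open-stabilizer subgroup on the cohomology side records exactly the classes coming from $CH^{g-1}(X_{k'})$; this then reduces the assertion to (1$_{fg}$) $\Leftrightarrow$ (3$_{fg}$) over $k'$.
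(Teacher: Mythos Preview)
Your overall architecture matches the paper's: the reverse implications go through Theorem \ref{thm:main1-precise}(5)$\Rightarrow$(1), and the forward implications combine algebraicity of $\ch(\mc P)$ with a Fourier-transform isomorphism on integral cohomology to transport the codimension-$1$ Tate/Hodge statement to codimension $g-1$. The paper carries this out on $X\times\widehat X$ rather than between $X$ and $\widehat X$, but that is a cosmetic difference.

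There is, however, a genuine gap in your treatment of the codimension-$1$ part over finitely generated $k$. You assert that the surjectivity of
\[
CH^1(X)_{\Z_\ell}\longrightarrow H^2(X_{\ov k},\Z_\ell(1))^G
\]
is Theorem \ref{thm:tate-zarhin-faltings-dejong} and requires no Jacobian input. That theorem, for finitely generated $k$, only gives the isomorphism $NS(X_{k_s})_{\Z_\ell}\xrightarrow{\sim}H^2(X_{\ov k},\Z_\ell(1))^{(1)}$; taking $G$-invariants leaves you with $NS(X_{k_s})^G_{\Z_\ell}$ on the left, not $NS(X)_{\Z_\ell}$. The inclusion $NS(X)\hookrightarrow NS(X_{\ov k})^G$ is \emph{not} surjective in general---this is exactly the phenomenon exploited by Petrov--Skorobogatov and Poonen--Stoll, and is the reason statement (3') appears in Theorem \ref{thm:main1} but not in Theorem \ref{thm:BdFV}. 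The same issue recurs when you apply Theorem \ref{thm:tate-zarhin-faltings-dejong} to $\widehat X$ in the Fourier step. The paper closes this gap via Proposition \ref{lem:neron-severi}: the direct-summand hypothesis (with $n$ odd) forces $NS(X)\xrightarrow{\sim}NS(X_{\ov k})^G$. So Jacobian input is essential here, not dispensable.

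A smaller point: your worry about the real case is misplaced. The paper does not track $2$-torsion through the Fourier; it invokes \cite[\S 5.2]{dGF2024integral}, which shows that $CH^1(X)\to\on{Hdg}^2(X_\C,\Z(1))^G$ is already surjective for every real abelian variety, with no $2$-torsion defect.
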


\begin{proof}
$(1_{fg})\Longrightarrow(2_{fg})$. It is enough to prove $(2_{fg})$ for $X\times \widehat{X}$: indeed, using the natural splitting of the projection $X\times \widehat{X}\rightarrow X$, this implies $(2_{fg})$ for $X$. 

If (1$_{fg}$) holds for $X$, then it holds for $X\times \widehat{X}$. By \Cref{lem:neron-severi}(2), the pullback map $\on{NS}(X\times\widehat{X})_R\to \on{NS}((X\times \widehat{X})_{k_s})^G_R$ is an isomorphism. 
Passing to $G$-invariants in the isomorphism (\ref{eq:tate-zarhin-faltings-1}), we deduce that for every $\ell\in S$, the cycle map \[CH^1(X\times \widehat{X})_{\Z_\ell}\rightarrow H^2((X\times \widehat{X})_{\ov{k}},\Z_\ell(1))^G\] is surjective. 
By \Cref{thm:main1-precise} (1)$\Rightarrow$(3), the classes of $c_1(\mathcal{P}_{X\times\widehat{X}})^{4g-2}/(4g-2)!$ in integral $\ell$-adic cohomology for all primes $\ell\in S$ are algebraic, hence represented by some $\Gamma_\ell \in CH^{4g-2}(X\times \widehat{X}\times \widehat{X}\times X)_{\Z_\ell}$. We have isomorphisms
\[
(\Gamma_\ell)_*\colon H^2((X\times \widehat{X})_{\ov{k}},\Z_\ell(1))^G\xrightarrow{\sim}H^{4g-2}((X\times\widehat{X})_{\ov{k}},\Z_\ell(2g-1))^G
\]
for all $\ell \in S$; see \cite[Proposition 1]{beauville1983fourier} or \cite[Section 7]{totaro2021integral}.
By the compatibility of the cycle maps with the action of correspondences, we conclude that $(2_{fg})$ holds for $X\times \widehat{X}$, as desired.

$(2_{fg})\Longrightarrow(3_{fg})$. Obvious.

$(3_{fg})\Longrightarrow(1_{fg})$. This follows from \Cref{thm:main1-precise} (5)$\Rightarrow$(1).

The proofs of $(1_{{fg}_s})\Longleftrightarrow(2_{{fg}_s})$,
$(1_{f})\Longleftrightarrow(2_f)$,
and $(1_r)\Longleftrightarrow(2_r)$ are similar. 
For $(1_f)\Longleftrightarrow(2_f)$, we use the isomorphisms (\ref{eq:tate-zarhin-faltings-2}) and (\ref{eq:tate-zarhin-faltings-3}) instead of (\ref{eq:tate-zarhin-faltings-1}), as well as \Cref{lem:p-adic-abelvar} (3) in addition to \cite[Proposition 1]{beauville1983fourier} or \cite[Section 7]{totaro2021integral}.
For $(1_r)\Longleftrightarrow(2_r)$, we use the fact that for every abelian variety $X$ over the field $\R$ of real numbers, $CH^1(X)\rightarrow \on{Hdg}^2(X_\C,\Z(1))^G$ is surjective; see \cite[\S 5.2]{dGF2024integral}.
\end{proof}

\begin{proof}[Proof of \Cref{thm:main1}]
Follows from Theorems \ref{thm:main1-precise} and \ref{thm:main2-precise}, where $S$ is taken to be the set of all prime numbers.
\end{proof}

\begin{proof}[Proof of \Cref{cor:main}]
We recall that, by the Lang--Weil estimate, every geometrically integral $\F_q$-scheme of finite type contains a zero-cycle of degree $1$. Let $X$ be an abelian variety over $\F_q$, and let $\theta\in \on{NS}((X\times \widehat{X})_{\ov{\F}_q})^G$ be the class of $\mc{P}$ on $X\times \widehat{X}$. We have $\on{deg}(\theta^{2g}/(2g)!)=(-1)^g$; see \cite[Corollary 1 p. 129 and The Riemann--Roch Theorem p. 150]{mumford1970abelian}.

If $X$ is a direct summand of a product of Jacobians, then so is $X\times\widehat{X}$. Thus, by \Cref{thm:main1}$ (1')\Rightarrow (4)$, the integral Tate conjecture for $1$-cycles holds for $X\times\widehat{X}$, and hence for $X$.

Conversely, if the integral Tate conjecture for $1$-cycles holds for $X\times\widehat{X}$, then by \Cref{thm:main1}$ (4)\Rightarrow (1')$ we deduce that $X\times\widehat{X}$ is a direct summand of a product of Jacobians, and hence so is $X$.
\end{proof}

\begin{proof}[Proof of \Cref{thm:direct-summand-tate-consequence}]
Let $\F_q$ be a finite field of characteristic $p$, let $X$ be an abelian variety over $\F_q$, and let $g\coloneqq \dim X$. Since $X$ is a direct summand of $X\times\hat{X}$, replacing $X$ by $X\times \hat{X}$ we may assume that there exists $\theta\in \on{NS}(X_{\ov{\F}_q})^G$ such that $\on{deg}(\theta^{g}/g!)=\pm 1$. By a theorem of Schoen \cite[Theorem 0.5]{schoen1998integral}, the Tate conjecture for divisors on surfaces over finite fields of characteristic $p$ implies that the cycle map
\[CH^{g-1}(X_{\ov{\F}_q})_{\Z_\ell}\rightarrow H^{2g-2}(X_{\ov{\F}_q},\Z_\ell(g-1))^{(1)}\]
is surjective for every prime number $\ell$ different from $p$.
The proof then follows from \Cref{thm:main2-precise}  applied to the set $S$ of all prime numbers different from $p$.
\end{proof}

\section{Weil restrictions and the direct summand property}\label{section:weil-restrictions}

Let $k$ be a field, let $k'/k$ be a finite separable field extension, and let $X'$ be an abelian variety over $k'$. We write $X\coloneqq \on{Res}_{k'/k}(X')$ for the Weil restriction of $X'$ along $k'/k$, which is represented by a $k$-group by \cite[Proposition A.5.1]{conrad2015pseudo} (see also \cite[\S 7.6, Theorem 4]{bosch1990neron}). If $k'=k^n$ for some integer $n\geq 0$, then $X=\prod_{i=1}^nX_i$, where $X_i$ is  fiber of $X'\to \on{Spec}(k')$ over the $i$-th factor $k$. Moreover, the formation of $\on{Res}_{k'/k}(X')$ commutes with arbitrary field extensions of $k$. Therefore, letting $\Sigma\coloneqq \on{Hom}_k(k',k_s)$ be the finite $G$-set corresponding to $k'/k$, the $k_s$-algebra isomorphism \[k'\otimes_kk_s\xrightarrow{\sim} \prod_{\sigma\in \Sigma} k_s,\qquad a\otimes b\mapsto (\sigma(a)b)_{\sigma\in \Sigma}\] induces a $k_s$-group isomorphism
\begin{equation}\label{eq:res-splits}
X_{k_s}\xrightarrow{\sim} \prod_{\sigma \in \Sigma}(X')^{\sigma},
\end{equation}
where $(X')^\sigma$ is the base change of $X'$ along the homomorphism $\sigma\colon k'\to k_s$. In other words, $X$ is obtained from $\prod_{\sigma \in \Sigma}(X')^{\sigma}$ by Galois descent. In particular, $X$ is an abelian variety of dimension $|\Sigma|\on{dim}(X')$ over $k$. Here and in what follows, for every $\sigma \in \Sigma$, we let \[\on{pr}_\sigma\colon \prod_{\tau \in \Sigma}(X')^{\tau} \rightarrow (X')^\sigma\] be the projection corresponding to $\sigma$.

Let $\widehat{X}\coloneqq \on{Res}_{k'/k}(\widehat{X}')$. By Galois descent, the line bundle $\bigotimes_{\sigma \in \Sigma}\on{pr}_\sigma^*(\mathcal{P}_{X'})$ on $\prod_{\sigma \in \Sigma}(X'\times \widehat{X}')^\sigma$ descends to a line bundle $L$ on $X\times \widehat{X}$. (Since $(X\times \hat{X})(k)\neq\emptyset$, it suffices to check that the isomorphism class of the line bundle in question is Galois invariant.) 
 By \cite[\S 13, p. 123, Proposition]{mumford1970abelian}, $L$ is a Poincar\'e line bundle $\mc{P}_X$ on $X\times\widehat{X}$. In formulas, we have an isomorphism of line bundles on $X_{k_s}$:
\begin{equation}\label{eq:poincare-descent}
    (\mc{P}_X)_{k_s}\cong \bigotimes_{\sigma \in \Sigma}\on{pr}_\sigma^*(\mathcal{P}_{X'})
\end{equation}
In particular, as one would expect from the notation, $\widehat{X}$ is the dual abelian variety of $X$.

\begin{prop}\label{lem:weil-restrictions-precise}
Let $k$ be a field of characteristic $p\geq 0$, let $k'/k$ be a finite separable field extension, let $X'$ be an abelian variety over $k'$, and set $X\coloneqq \on{Res}_{k'/k}(X')$. Let $S$ be a set of prime numbers, and let $R\coloneqq \cap_{\ell\in S}\Z_{(\ell)}$. The equivalent statements (1) - (3) in \Cref{thm:main1-precise}, with respect to $S$, hold for $X'/k'$ if and only if they hold for $X/k$.
\end{prop}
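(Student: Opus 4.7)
The plan is to invoke Theorem \ref{thm:main1-precise} to reduce the equivalence to condition (1), and then to prove the two directions by direct Weil restriction arguments together with Proposition \ref{prop:minimal-class-direct-summand}.

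For ``(1) for $X$ implies (1) for $X'$'', use the canonical splitting $X_{k'} \cong X' \times Z'$ of $k'$-group schemes obtained from Galois descent of $X_{k_s} = \prod_{\sigma \in \Sigma}(X')^\sigma$ under $\on{Gal}(k_s/k') \leq G$; any factorization $n_X = h \circ f$ through $Y = \prod_j J(C_j)$ with $C_j(k) \neq \emptyset$ then yields, after base change and sandwiching with the section $s\colon X' \hookrightarrow X_{k'}$ and the projection $p\colon X_{k'} \twoheadrightarrow X'$, a factorization of $n_{X'}$ through $Y_{k'} = \prod_j J(C_{j,k'})$, a product of Jacobians of smooth projective geometrically integral $k'$-curves with $k'$-points.

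For the other direction, applying $\on{Res}_{k'/k}$ to $n_{X'}\colon X' \to Y' = \prod_i J(C_i') \to X'$ gives $n_X$ factoring through $\on{Res}(Y') = \prod_i \on{Res}_{k'/k}(J(C_i'))$. The problem thereby reduces to showing: for every smooth projective geometrically integral $k'$-curve $C'$ with $C'(k') \neq \emptyset$, the $k$-abelian variety $X_0 := \on{Res}_{k'/k}(J(C'))$ is a prime-to-$S$ direct summand of a product of Jacobians of $k$-curves with $k$-points. This is established by applying Proposition \ref{prop:minimal-class-direct-summand} to $X_0$ with the class $\theta := \sum_{\sigma \in \Sigma}\on{pr}_\sigma^*\theta_{(C')^\sigma} \in \on{NS}((X_0)_{\ov k})^G$, where $\theta_{(C')^\sigma}$ is the canonical principal polarization of $J((C')^\sigma)$ (Example \ref{theta-divisor}). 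Writing $g' := g(C')$ and $g := \dim X_0 = [k':k]g'$, a multinomial expansion using $\theta_{(C')^\sigma}^{g'+1} = 0$ yields $\deg(\theta^g/g!) = \prod_\sigma \deg(\theta_{(C')^\sigma}^{g'}/g'!) = \pm 1$, prime to every $S$.

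The remaining step is to lift $\theta^{g-1}/(g-1)!$ to $CH^{g-1}(X_0)$. The multinomial expansion forces the only tuples $(n_\sigma)$ with $\sum n_\sigma = g - 1$ and $n_\sigma \leq g'$ to have $n_{\sigma_0} = g' - 1$ for exactly one $\sigma_0$ and $n_\sigma = g'$ otherwise, giving
\[
\theta^{g-1}/(g-1)! \;\equiv\; \pm\sum_{\sigma_0 \in \Sigma}(\iota_{\sigma_0})_*\bigl(\theta_{(C')^{\sigma_0}}^{g'-1}/(g'-1)!\bigr)
\]
in $(CH^{g-1}((X_0)_{\ov k})/\mathrm{num})_\Q$, where $\iota_{\sigma_0}\colon J((C')^{\sigma_0}) \hookrightarrow (X_0)_{k_s}$ embeds as the $\sigma_0$-factor (origin elsewhere). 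By Example \ref{theta-divisor}, $\theta_{(C')^{\sigma_0}}^{g'-1}/(g'-1)!$ is represented by $[W_1((C')^{\sigma_0})]$. The reduced closed subscheme $W := \bigcup_{\sigma_0 \in \Sigma}\iota_{\sigma_0}(W_1((C')^{\sigma_0})) \subset (X_0)_{k_s}$ is $G$-invariant (since $G$ permutes $\Sigma$ and carries each constituent to the appropriate Galois conjugate), so by Galois descent it descends to a closed $k$-subscheme of $X_0$ whose class lifts $\pm\theta^{g-1}/(g-1)!$; its components are pairwise disjoint as they lie in distinct factors of the product. Proposition \ref{prop:minimal-class-direct-summand} then produces the required decomposition of $X_0$. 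The main technical obstacle is the careful verification of the Galois descent of $W$ as a subscheme (not merely set-theoretically) together with the multinomial bookkeeping identifying the minimal class with the sum over $\sigma_0$; these are routine but require attention to signs and the disjointness of the components.
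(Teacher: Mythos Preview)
Your proof is correct, but the second implication takes a genuinely different route from the paper's.

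For the direction ``(1) for $X/k$ $\Rightarrow$ (1) for $X'/k'$'' you and the paper argue identically via the splitting $X'\hookrightarrow X_{k'}$.

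For the converse, the paper does \emph{not} apply $\on{Res}_{k'/k}$ to the factorization. Instead it starts from condition (3) for $X'/k'$ and verifies condition (5) of \Cref{thm:main1-precise} for $X\times\widehat{X}$ with $\theta=c_1(\mc{P}_X)$: using \eqref{eq:poincare-descent} it writes the cohomology class of $c_1(\mc{P}_X)^{2gN-1}/(2gN-1)!$ as the $G$-invariant sum $\sum_{\sigma}\on{pr}_\sigma^*(A^\sigma)\prod_{\tau\neq\sigma}\on{pr}_\tau^*(B^\tau)$, where $A$ and $B$ are cycle classes on $X'\times\widehat{X}'$ representing $c_1(\mc{P}_{X'})^{2g-1}/(2g-1)!$ and the origin; this Galois-invariant cycle descends and gives (5), hence (1), for $X\times\widehat{X}$ and therefore for $X$. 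In short, the paper works cohomologically with the Poincar\'e bundle on $X\times\widehat{X}$, whereas you work geometrically with the product theta divisor on $\on{Res}_{k'/k}(J(C'))$ and invoke \Cref{prop:minimal-class-direct-summand} directly. Your approach has the virtue of never passing through cohomology at all and of being entirely self-contained once \Cref{prop:minimal-class-direct-summand} is in hand; the paper's approach avoids the intermediate reduction to a single Jacobian and the explicit multinomial bookkeeping, and ties in more directly with condition (3).

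One small correction: your claim that the components $\iota_{\sigma_0}(W_1((C')^{\sigma_0}))$ are pairwise disjoint is false, since each $W_1$ contains the origin (the image of the chosen $k'$-point of $C'$), so all the axis-curves meet at $0\in (X_0)_{k_s}$. This does not affect your argument: the reduced union $W$ still has these as its irreducible components, so $[W]=\sum_{\sigma_0}[\iota_{\sigma_0}(W_1)]$ as cycles, and Galois descent of the cycle (or of the reduced closed subscheme) goes through unchanged.
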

\begin{proof}
If $X$ is a prime-to-$S$ direct summand of a product of Jacobians of smooth projective $k$-curves with a $k$-point, then $X_{k'}$ is a prime-to-$S$ direct summand of a product of Jacobians of smooth projective $k'$-curves with a $k'$-point. Since $k'$ is an \'etale $k$-algebra, the diagonal map $k'\to k'\otimes_kk'$ is a section of the multiplication map $k'\otimes_kk'\to k'$, and as a result the canonical $k'$-group inclusion $X'\to X_{k'}$ is split. We conclude that $X'$ is a prime-to-$S$ direct summand of a product of Jacobians of smooth projective $k'$-curves with a $k'$-point. This proves that if statement (1) of \Cref{thm:main1-precise} holds for $X/k$, then it holds for $X'/k'$.

Conversely, we assume that statement (3) of \Cref{thm:main1-precise} holds for $X'/k'$, and we prove that statement (1) of \Cref{thm:main1-precise} holds for $X/k$. We fix an inclusion of $k'$ in $k_s$, and we let $G'\coloneqq \on{Gal}(k_s/k')$ be the corresponding finite-index subgroup of $G$. Let $g$ be the dimension of $X'$, let $\ell\in S$, and define $A$ to be the class of $c_1(\mathcal{P}_{X'})^{2g-1}/(2g-1)!$ in integral $\ell$-adic cohomology if $\ell \neq p$ or in crystalline cohomology if $\ell=p$, and let $\alpha\in Z^{2g-1}(X'\times\widehat{X}')_{\Z_\ell}$ be a $1$-cycle whose cycle class is equal to $A$. 
Let $\beta\in Z^{2g}(X'\times \widehat{X}')_{\Z_\ell}$ be the zero-cycle of $0\in (X'\times\widehat{X}')(k')$, then the cycle class of $\beta$ is equal to
the class $B$ of $c_1(\mathcal{P}_{X'})^{2g}/(2g)!$ in integral $\ell$-adic cohomology if $\ell\neq p$ or in crystalline cohomology if $\ell=p$, because $\deg(\beta)=\deg(B)=1$ and the degree map 
gives an isomorphism $H^{4g}((X'\times \widehat{X}')_{\ov{k}},\Z_\ell(2g))\xrightarrow{\sim} \Z_\ell$ if $\ell \neq p$ or an isomorphism $H^{4g}((X\times\widehat{X})_{k_p}/W(k_p))^{F=p^{2g}}\xrightarrow{\sim} \Z_p$ if $\ell=p$. 

Consider the discrete $G$-set $\Sigma\coloneqq \on{Hom}_k(k',k_s)$, and let $N\coloneqq |\Sigma|=[k':k]$. We define a $1$-cycle on $\prod_{\sigma\in \Sigma}(X'\times \widehat{X}')^\sigma$ by the formula
\[
\gamma' \coloneqq \sum_{\sigma\in\Sigma}\left(\on{pr}_{\sigma}^*(\alpha^{\sigma})\prod_{\tau\in \Sigma\setminus\{\sigma\}}\on{pr}_{\tau}^*(\beta^{\tau})\right),
\]
where for all $\sigma\in \Sigma$ we let $\alpha^\sigma$ and $\beta^\sigma$ be the pullbacks of $\alpha$ and $\beta$ along $\sigma$, respectively.
Since $\gamma'$ is invariant under the $G$-action, there exists a unique $1$-cycle $\gamma\in Z^{2gN-1}(X\times \widehat{X})_{\Z_\ell}$ such that $\gamma_{k_s}=\gamma'$.
By (\ref{eq:poincare-descent}), the cycle class of $\gamma$ in integral $\ell$-adic cohomology if $\ell\neq p$ or in crystalline cohomology if $\ell=p$
is given by the class of $c_1(\mathcal{P}_X)^{2gN-1}/(2gN-1)!$, which is equal to
\[
\sum_{\sigma\in\Sigma}\left(\on{pr}_{\sigma}^*(A^{\sigma})\prod_{\tau\in\Sigma\setminus\{\sigma\}}\on{pr}_{\tau}^*(B^{\tau})\right).
\]
Therefore
(5) in \Cref{thm:main1-precise} holds for $X\times \widehat{X}$ and the class $\theta \in \on{NS}((X\times\widehat{X})_{\ov{k}})^G$ of $\mathcal{P}$ on $X\times \widehat{X}$, 
which implies that (1) in \Cref{thm:main1-precise} holds for $X\times \widehat{X}$, and hence also for $X$.
\end{proof}

\begin{proof}[Proof of \Cref{thm:tate-finite-wr-jacobians}]
By the Lang--Weil estimate, every geometrically integral variety over any finite field has a zero-cycle of degree $1$.
Thus, letting $S$ be the set of all prime numbers, \Cref{thm:main1-precise} (2) holds for $X'$,
hence by \Cref{lem:weil-restrictions-precise}, assertions (1) - (3) of \Cref{thm:main1-precise} hold for $\on{Res}_{\F_{q'}/\F_q}(X')$, so that by \Cref{thm:main2-precise}, the integral Tate conjecture for $1$-cycles holds for $\on{Res}_{\F_{q'}/\F_q}(X')$.
\end{proof}

\begin{proof}[Proof of \Cref{thm:tate-finite-Fp}]
Let $p$ be a prime, and let $\F_q$ be a finite field of characteristic $p$. 
\Cref{cor:main} shows that
the integral Tate conjecture holds for all abelian varieties over $\F_q$ if and only if every abelian variety over $\F_q$ is a direct summand of a product of Jacobians. By \Cref{lem:weil-restrictions-precise},
the latter is implied by the statement that every abelian variety over $\F_p$ is a direct summand of a product of Jacobians,
and by the Zarhin trick \cite{zarhin1974remark},
the assertion is further reduced to the statement that every principally polarized abelian variety over $\F_p$ is a direct summand of a product of Jacobians.
Finally, by \Cref{thm:main1}, the last statement is equivalent to the validity of the integral Tate conjecture for $1$-cycles on all principally polarized abelian varieties over $\F_p$.
\end{proof}

\section*{Acknowledgements}
We thank Marco D'Addezio for helpful correspondence and for pointing out a mistake in a previous version of \Cref{thm:tate-zarhin-faltings-dejong} over imperfect ground fields. The second-named author thanks Stefan Schreieder for interesting discussions and useful suggestions. The second-named author was supported by the European Union's Horizon 2020 research and innovation programme under grant agreement No. 948066 (ERC--StG RationAlgic).

\newcommand{\etalchar}[1]{$^{#1}$}

\end{document}